\newtheorem{theorem}{Theorem}[section]
\newtheorem{proposition}[theorem]{Proposition}
\newtheorem{lemma}[theorem]{Lemma}
\newtheorem{corollary}[theorem]{Corollary}
\newtheorem{problem}[theorem]{Problem}
\theoremstyle{remark}
\newtheorem{remark}{Remark}[section]
\title{Domination on vertex--weighted graphs induced by a coloring}
\author{María A. Garrido-Vizuete$^1$ \and Mucuy--kak Guevara$^2$\and Alberto Márquez$^1$\and  Rafael Robles$^1$}
\begin{document}


\nomenclature[alpha]{$\alpha(G)$}{Independence number of \( G \). It is the size of the largest independent set in \( G \).}
\nomenclature[Gamma_uc]{$\Gamma_{uc}(G)$}{Up--color domination number for all possible colorings of \( G \).}
\nomenclature[chi]{$\chi(G)$}{Chromatic number of \( G \). It is the minimum number of colors needed to color \( G \) such that adjacent vertices have different colors.}
\nomenclature[chi_uc]{$\chi_{uc}(G)$}{Chromatic up--color number, representing the minimum number of colors needed for an up--color dominating set.}
\nomenclature[delta]{$\delta^-(u)$, $\delta^+(u)$}{In--degree and out--degree of vertex \( u \) in a digraph derived from a colored graph.}
\nomenclature[GVE]{$G(V, E)$}{A graph \( G \) with a set of vertices \( V \) and a set of edges \( E \).}
\nomenclature[gamma]{$\gamma(G)$}{Minimum domination in a graph \( G \). It is the size of the smallest dominating set.}
\nomenclature[Kr]{$K_{r,s}$}{Complete bipartite graph with \( r \) and \( s \) vertices in each of its independent sets.}
\nomenclature[gamma_uc]{$\gamma_{uc}(G, c)$}{Up--color domination number for a pair \( (G, c) \), where \( c \) is a coloring of \( G \). It is the minimum size of a set of vertices that dominates other vertices with lower colors.}
\nomenclature[Mc]{$\mathcal{M}_c(G)$}{Set of local maxima in the graph \( G \) with respect to coloring \( c \).}
\nomenclature[iG]{$i(G)$}{Independent domination number of \( G \). It is the minimum size of a dominating set that is also independent.}
\nomenclature[N]{$N^-(u)$, $N^+(u)$}{In-neighborhood and out--neighborhood of vertex \( u \).}
\nomenclature[omega_uc]{$\omega_{uc}(G, c)$}{Up--color domination weight of a graph \( G \) given a coloring \( c \). This is the minimum weight of a set of vertices dominating other vertices with lower colors.}
\nomenclature[Omega_uc]{$\Omega_{uc}(G)$}{Minimum up--color domination weight among all possible colorings of \( G \).}
\nomenclature[PcT]{$P_n$}{Path graph with \( n \) vertices.}
\nomenclature[Cn]{$C_n$}{Cycle graph with \( n \) vertices.}
\nomenclature[ScT]{$S_c(dT)$}{Set of sink vertices in the directed tree derived from \( T \).}
\nomenclature[CcT]{$C_c(dT)$}{Set of confluence vertices in the directed tree derived from \( T \).}
\nomenclature[dT]{$dT_v$}{Directed subtree rooted at vertex \( v \).}
\nomenclature[theta]{$\theta(G)$}{Clique covering number of \( G \). It is the minimum number of cliques needed to cover all vertices of \( G \).}


\maketitle

\footnotetext[1]{Dep. Matem\'atica Aplicada I, Universidad de Sevilla, Spain. Emails: \{vizuete, almar, rafarob\}@us.es}
\footnotetext[2]{Dep. Matem\'aticas, Facultad de Ciencias, UNAM, México. Email: mucuy-kak.guevara@ciencias.unam.mx}

\begin{abstract}
    This paper introduces the concept of domination in the context of colored graphs (where each color assigns a weight to the vertices of its class), termed {\em up--color domination}, where a vertex dominating another must be heavier than the other. That idea defines, on one hand, a new parameter measuring the size of minimal dominating sets satisfying specific constraints related to vertex colors. The paper proves that the optimization problem associated with that concept is an NP--complete problem, even for bipartite graphs with three colors. On the other hand, a weight-based variant, the up--color domination weight, is proposed, further establishing its computational hardness. The work also explores the relationship between up--color domination and classical domination  and coloring concepts. Efficient algorithms for trees are developed that use their acyclic structure to achieve polynomial--time solutions.

\end{abstract}

\section{Introduction} \label{sec:intr}
As is well known, the domination number $\gamma(G)$ of a graph $G$ is the size of a smallest set $D$ of vertices of $G$ such that every vertex outside $D$ has at least one neighbor in $D$, that is, $D$ is a dominating set of $G$.
Furthermore, there are several papers dealing with the problem of domination on colored graphs (see \cite{2012-abc-odcig,2019-ms-pdcng,2020-alikhani-total, 2023-hkpp-ctdcg}), in which a domination coloring of a graph $G$ is a coloring of $G$ such that each vertex of $G$ dominates at least one color class, and each color class is dominated by at least one vertex. However, this definition is too restrictive and it does not reflect among the vertices of the graph the usual hierarchy given by a domination relationship. 
Furthermore, there is the concept of Roman domination~\cite{Stewart1999DefendTR,COCKAYNE200411,ReVelle2000}, where the vertices are colored with the colors $0,1,2$, such that the vertices with color $2$ dominate vertices of color $0$, respecting the hierarchy of color $2$ over $0$, but not including color $1$ in this hierarchy.

In this work, we introduce a natural concept of domination in the context of colored graphs. Throughout this paper, a graph is finite, undirected, and simple and is denoted by $G(V,E)$, where $V$ and $E$ are its set of vertices and its set of edges, respectively.

Given a graph $G(V,E)$, a coloring of $G$ is a map $c:V \longrightarrow \{0,1,2\ldots\}$ such that $c(v_i) \neq c(v_j)$ if $v_i$ and $v_j$ are adjacent vertices. If the coloring uses exactly $k$ colors, it is called a $k$--coloring, and, as is well known, the chromatic number $\chi(G)$ of a graph $G$ is the minimum value of $k$. Hereinafter, an optimal coloring is a $\chi(G)$--coloring.

Then, given a graph $G$ with a coloring $c$, it is said that $D \subseteq V$ is an {\em up--color dominating $c$--set} of the pair $(G,c)$ if: (1) for any vertex $v$ not in $D$ there exists an adjacent vertex $d \in D$ such that $c(v)<c(d)$; (2) $D$ contains no vertex of color $0$. 

Although our study focuses on graphs, digraphs are handled as a tool for representing the up--color domination relationship. Any coloring $c$ of $G(V,E)$ provides a natural (partial) ordering of $V$, so the pair $(G,c)$ can be considered as an acyclic digraph such that every edge starts from its vertex with the highest color. Thus, all studies and results about out--domination in acyclic digraphs are valid for up--color domination in graphs.

As usual in this directed context, $\delta^-(u)$ and $N^-(u)$ denote the in--degree and in--neigh\-bor\-hood of the vertex $u$, respectively, and, analogously, $\delta^+(u)$ and $N^+(u)$ are the out--degree and out--neighborhood of $u$, respectively. Recall that a {\em confluence vertex} $v$ verifies that $\delta^-(v)>1$. In this paper, even if we do not mention explicitly, we will consider that any graph with a coloring as mentioned before has the structure of a digraph.

The structure of this paper is as follows. After this introductory section, we give some definitions and some easy bounds in the next section. As we said, we try to optimize the size or the weight of an up--color dominating set; the first task is accomplished in Section 3, where we prove that the problem is NP--complete even for bipartite graph (and three colorings), but not for trees. And, after proving that for any graph and for any dominating set, there exists always a coloring such that that set is also an up--color domination set, we define a new problem (to find an optimal coloring with such a property) that turns out to be NP--hard. Then, in the same section, we study some properties of the size of optimal up--color dominating sets. Similar problems, but with respect to the weight of the set instead of its size, are considered in Section 4. Again, we give some results about complexity, comparing the results on general graphs with those of trees. We finish with a section with some conclusions and open problems. For the sake of clarity, we have included an appendix with the notation used along the paper.

\section{Definitions and preliminaries}

This section is devoted to the basic concepts and problems arising in up--color domination context. Likewise, some preliminary results are presented, the proofs of which can be obtained straightforwardly.

Graph-theoretic concepts and definitions used in this work are rooted in the foundational text by Harary~\cite{1969-h-gt}, which provides an extensive exploration of graph theory basics.

As we defined above, given a graph $G$ with a coloring $c$, it is said that $D \subseteq V$ is an {\em up--color dominating $c$--set} of the pair $(G,c)$ if: (1) for any vertex $v$ not in $D$ there exists an adjacent vertex $d \in D$ such that $c(v)<c(d)$; 
(2) $D$ contains no vertex of color $0$. Thus, a up--color domination relationship naturally arises among the vertices of $G$, through which $v$ and $d$ are said to be {\em submissive} and {\em dominant} vertices, respectively.

This domination concept allows us to define two different optimization goals. In this way, following tradition, we can minimize the size of the dominating set and thus the smallest cardinal of an up--color dominating $c$--set for the pair $(G,c)$ is called {\em the up--color domination $c$--number} of $(G,c)$ and is denoted by $\gamma_{uc}(G,c)$. Furthermore, insofar as the coloring gives in a natural way a weight to the vertices of the graph (and to subsets of the vertices, adding the individual weights), the minimum weight of an up--color dominating $c$--set for the pair $(G,c)$ is called {\em the up--color domination $c$--weight} of $(G,c)$ and is denoted by $\omega_{uc}(G,c)$. 

The vertex sets that lead to $\gamma_{uc}(G,c)$ and $\omega_{uc}(G,c)$ are called {\em  $\gamma_{uc}$--dominating $c$--set} and {\em $\omega_{uc}$--dominating $c$--set}, respectively. Obviously, both any $\gamma_{uc}$-- and $\omega_{uc}$--dominating $c$--set contains all vertices colored with the highest color of $c$.

Furthermore, the minimum $\omega_{uc}(G,c)$ among all possible colorings $c$ is called {\em the up--color domination weight of $G$} and denoted by $\Omega_{uc}(G)$. Every coloring $c$ verifying $\Omega_{uc}(G)=\omega_{uc}(G,c)$ is named {\em $\Omega_{uc}$--domination coloring} and the associated $\omega_{uc}$--dominating $c$--set is called  {\em $\Omega_{uc}$--dominating set}. In Section~\ref{sec:cwd}, we address the question of weight in the context of up--color domination in graphs. While in Section ~\ref{sec:cdn}, we focus on providing bounds for the up-color domination number and calculating the complexity of its computation.

The definition of up--color domination number makes sense: consider $K_{2,3}$, if we assign to the class with three elements the largest number in a $2$--coloring $c$, then $\gamma_{uc}(K_{2,3},c)=3$, but if we interchange the colors, then $\gamma_{uc}(K_{2,3},c')=2=\gamma(G)$.

A very first and simple result is:

\begin{lemma} \label{lem:1bound}
Given a pair $(G,c)$, $\gamma_{uc}(G,c) \geq \max \{ |c^{-1}(i)|, \gamma(G) \} $, where $i$ represents the largest color with non--empty inverse image.
\end{lemma}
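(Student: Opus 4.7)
The proof decomposes into two independent inequalities, and the plan is to establish each bound separately and then take the maximum.

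For the bound $\gamma_{uc}(G,c) \geq |c^{-1}(i)|$, the plan is to show that every up--color dominating $c$--set $D$ must contain the entire top color class. Pick any vertex $v$ with $c(v) = i$, the largest color used. By definition of coloring, no vertex of $G$ has color strictly greater than $i$, so no neighbor of $v$ can serve as a dominant vertex for $v$ in the up--color sense. Hence $v$ itself must belong to $D$. Applying this to every vertex of $c^{-1}(i)$ gives $c^{-1}(i) \subseteq D$, so $|D| \geq |c^{-1}(i)|$.

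For the bound $\gamma_{uc}(G,c) \geq \gamma(G)$, the plan is simply to observe that any up--color dominating $c$--set is, in particular, an ordinary dominating set of $G$. Indeed, condition (1) in the definition guarantees that every $v \notin D$ has at least one neighbor in $D$ (the color inequality $c(v) < c(d)$ is a strengthening that we discard), which is precisely the classical domination condition. Therefore $|D| \geq \gamma(G)$.

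Taking both bounds together yields $|D| \geq \max\{|c^{-1}(i)|, \gamma(G)\}$ for every up--color dominating $c$--set $D$, and in particular for one of minimum size, giving the claim. I do not anticipate any real obstacle here: both inequalities follow directly from unpacking the definitions, and no structural property of the coloring beyond the existence of a maximum color is used.
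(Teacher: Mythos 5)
Your proof is correct and matches the approach the paper intends: the paper leaves this lemma as a straightforward consequence of the definitions, noting elsewhere that every up--color dominating $c$--set must contain all vertices of the highest color, which is exactly your first inequality, while your second inequality (any up--color dominating $c$--set is an ordinary dominating set) is the obvious remaining half. Nothing further is needed.
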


In fact, it is possible to refine a little bit previous lemma. Given a pair $(G,c)$, a vertex $v\in V(G)$ is a {\em local maximum} for $c$ if 
$c(u) \leq c(v)$ for all neighbor $u$ of $v$. Let $M_c(G)$ denote the set of local maxima of $G$ for $c$ (obviously, $c^{-1}(i) \subseteq M_c(G)$). Now:

\begin{lemma} \label{lem:1bound2}
Given a pair $(G,c)$, $\gamma_{uc}(G,c) \geq \max \{ |M_c(G)|, \gamma(G) \}$.
\end{lemma}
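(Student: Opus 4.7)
The plan is to establish the two inequalities $\gamma_{uc}(G,c) \geq \gamma(G)$ and $\gamma_{uc}(G,c) \geq |M_c(G)|$ separately, each by a short argument directly from the definition of an up--color dominating $c$--set. Let $D$ be any up--color dominating $c$--set realizing $\gamma_{uc}(G,c)$.

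For the first inequality, I would observe that condition (1) in the definition of an up--color dominating $c$--set already implies the classical domination condition: every $v \notin D$ has at least one neighbor $d \in D$ (namely, one with $c(d) > c(v)$, which in particular is a neighbor). Hence $D$ is a dominating set of $G$ in the usual sense, and $|D| \geq \gamma(G)$.

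For the second inequality, the key step is to show that $M_c(G) \subseteq D$. Suppose, for contradiction, that some local maximum $v \in M_c(G)$ does not belong to $D$. By condition (1), there must exist a vertex $d \in D$ adjacent to $v$ with $c(v) < c(d)$. But this directly contradicts $v$ being a local maximum, which requires $c(u) \leq c(v)$ for every neighbor $u$. Therefore $M_c(G) \subseteq D$ and $|D| \geq |M_c(G)|$.

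Combining both bounds yields $\gamma_{uc}(G,c) \geq \max\{|M_c(G)|, \gamma(G)\}$, as required. I do not foresee any serious obstacle: the argument is essentially a reading of the definition, and the only delicate point is the degenerate situation in which a local maximum has color $0$ (which, being forbidden in $D$ by condition (2), makes no up--color dominating $c$--set exist and renders the inequality vacuous). In a proper coloring a color--$0$ vertex with at least one neighbor cannot be a local maximum, so this pathology only arises for isolated vertices of color $0$, and may be safely set aside.
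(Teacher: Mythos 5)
Your proof is correct and is exactly the straightforward argument the paper intends (it states this lemma without proof, and later uses the same key fact that local maxima must lie in any up--color dominating set, cf.\ Remark~\ref{remark:localmax}): condition (1) gives ordinary domination, and a local maximum outside $D$ would need a strictly higher-colored neighbor, a contradiction. Your handling of the degenerate color-$0$ local maximum (only possible for an isolated vertex, making the inequality vacuous) is a sensible extra precaution and does not change the argument.
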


As an immediate consequence of Lemma \ref{lem:1bound} (or Lemma \ref{lem:1bound2}) we can find examples of pairs $(G,c)$ such that $\gamma_{uc}(G,c)>\gamma(G)$. For instance, consider $K_{r,s}$, $r\geq s>2$, with $c$ any $2$--coloring. Then, clearly, $\gamma_{uc}(G,c)\geq s>2=\gamma(G)$.

\section{Up--color domination number} 
\label{sec:cdn}

Dominator coloring and total dominator coloring are concepts that combine domination theory and graph coloring. These notions have been extensively studied by Henning~\cite{2021-henning-dtdcig}, providing a foundational theoretical framework for the analysis presented in this work.

In this section, we consider the problem of optimizing the size of an up--color dominating set, and we prove that this problem is NP--complete even for bipartite graph with three colors, but linear for trees. Then, after proving that for any graph and for any dominating set, there exists always a coloring such that the same set is also an up--color dominating set, we define a new problem (to find an optimal coloring with such a property) that turns out to be NP--hard. Finally, we provide some theoretical bounds for the parameters considered in this section.

\noindent \textbf{\textsc{up--color domination $c$--number}}  \\
{\sc instance:} A graph $G=(V,E)$, a coloring $c$ and a positive integer $k$. \\
{\sc question:} Is $\gamma_{uc}(G,c) \leq k$?

It is trivial to prove that if $c$ uses only two colors ($1$ and $2$), then the $\omega_{uc}$--dominating $c$--set is just $c^{-1}(2)$. Thus, using only two colors $\gamma_{uc}(G,c)$ can be computed in linear time. That is not the case if $c$ uses at least three colors, even if $G$ is a bipartite graph. 

\begin{theorem} \label{th:npcd}
\textsc{\emph{up--color domination $c$--number}} is an NP--complete problem, even if the graph is bipartite and the image of the coloring is $\{1,2,3 \}$, and even for optimal coloring. 
\end{theorem}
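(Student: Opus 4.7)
Membership in NP is immediate: given a candidate set $D$, verify in polynomial time that $|D|\leq k$, that $D$ contains no vertex of color $0$, and that every $v \notin D$ has a neighbor $d \in D$ with $c(v)<c(d)$. For NP--hardness I would reduce from the Red--Blue Dominating Set problem (equivalently, Set Cover), which is NP--complete on bipartite instances: given $H = (R \cup B, E_H)$ and $k$, decide whether some $D^* \subseteq B$ of size at most $k$ dominates $R$. This problem is a natural fit for an up--color encoding in three colors, where color $3$ will force vertices into $D$, color $2$ will play the role of ``choosers'' from $B$, and color $1$ the role of ``elements to cover''.

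\textbf{Construction and analysis.} Setting $M := |B|+1$, I would build $(G,c)$ as follows: for each $r \in R$ introduce $M$ twin copies $r_1,\ldots,r_M$ of color $1$; for each $b \in B$ introduce $b$ of color $2$ together with a pendant $b^*$ of color $3$ joined only to $b$; for every edge $rb \in E_H$ and every $i$, add the edge $r_i b$. The resulting graph is bipartite, with all color-$1$ and color-$3$ vertices on one side and all color-$2$ vertices on the other; $c$ is a proper $3$--coloring using exactly the palette $\{1,2,3\}$. The correctness rests on three structural facts: (i) each $b^*$ is a local maximum and must belong to every up--color dominating $c$--set; (ii) each $b$ is then dominated by $b^* \in D$ and may stay outside $D$; (iii) each copy $r_i$ has only color-$2$ potential dominators, namely the $b$'s with $rb \in E_H$, so either $r_i \in D$ or some adjacent $b$ belongs to $D$. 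The choice of $M$ ensures that leaving even one $r \in R$ uncovered by the $B$--part of $D$ would cost at least $M > |B| \geq k^*$ color-$1$ additions, exceeding any possible saving on the $B$--side. Hence $\gamma_{uc}(G,c) = |B| + k^{*}$, where $k^{*}$ is the Red--Blue Dominating Set optimum, and $\gamma_{uc}(G,c)\leq |B|+k$ iff $k^{*}\leq k$.

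\textbf{Optimal coloring and main obstacle.} The coloring above is not optimal, since the bipartite $G$ has $\chi(G)=2$. To obtain the ``optimal coloring'' clause, I would enrich the construction by adding, for each $b$, one further color-$1$ vertex $b^{**}$ adjacent to both $b$ and $b^*$, thus creating a triangle $\{b,b^*,b^{**}\}$. This raises $\chi(G)$ to $3$, so the fixed $3$--coloring becomes a $\chi(G)$--coloring, while each $b^{**}$ remains dominated by $b^* \in D$ and the equivalence above is untouched. The principal technical obstacle is the multiplicity calibration: one must rule out rigorously any ``hybrid'' up--color dominating $c$--set that mixes color-$1$ copies with a strictly smaller choice of $B$--vertices and yet undercuts $|B| + k^{*}$. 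This is exactly what the choice $M = |B|+1$ is designed to prevent, and the bookkeeping for that trade-off is the only step requiring some care.
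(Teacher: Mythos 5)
Your proposal is correct and follows essentially the same route as the paper: a reduction from set cover (your Red--Blue Dominating Set is the same problem) in which each element is replaced by polynomially many color-$1$ copies, the set/chooser vertices get color $2$, a forced color-$3$ gadget dominates them (your per-$b$ pendants $b^*$ play the role of the paper's single apex $v_0$), and a triangle is attached to make the $3$--coloring optimal. The only differences are cosmetic, and your explicit calibration of the multiplicity $M$ spells out the step the paper dismisses as straightforward.
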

\begin{proof}

Obviously, \textsc{up--color domination number} is in NP.

Next, we transform \textsc{minimum cover} to \textsc{up--color domination number} 
Consider an instance of \textsc{minimum cover}. Thus, let $C$ be a collection of subsets (with at most three elements each) of a finite set $S$. If $|C|=l$, we map each element of $S$ to $l$ vertices of a graph $G$. Additionally, we introduce an extra vertex for every subset in $C$ and an additional vertex, $v_0$. This results in a graph $G$ (as depicted in Fig. \ref{fig:thnpcd}) where edges connect these vertices to the corresponding elements in the set and further edges link the extra vertex with all vertices representing subsets in collection $C$.

Now  consider the following coloring:
$$
c(v)= \left\{ \begin{array}{lc}
             1 &   \textrm{if $v$ represents an element of } S\\
             2 & \textrm{if $v$ represent an element of $C$}\\
             3 & \textrm{if } v=v_0
             \end{array}
   \right.$$
   
Clearly, $v_0 $ must belong to any up--color dominating $c$--set and the correspondence between choosing some subsets in $C$ and adding the related vertices of $G$ is straightforward.

Note that the coloring of the graph resulting from this particular transformation reveals is not an optimal coloring, however, we can add a dummy triangle to obtain an optimal coloring (since the new graph needs $3$ colors).
\end{proof}

\begin{figure}[ht]
	\centering
\includegraphics[width=0.8\textwidth]{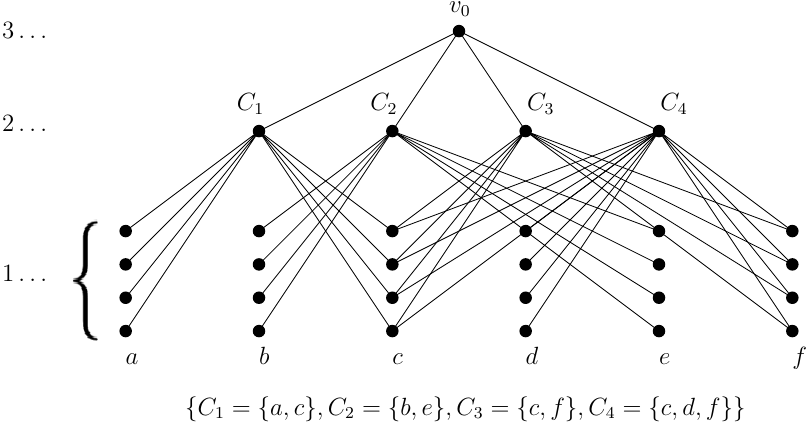}
	\caption{Theorem \ref{th:npcd}.}
 \label{fig:thnpcd}
\end{figure}

On the other hand, we have:

\begin{proposition} \label{prop:equal}
For any graph $G$ there exists a coloring $c$ such that $\gamma_{uc}(G,c)=\gamma(G)$.
\end{proposition}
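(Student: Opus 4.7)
The plan is to combine the lower bound of Lemma~\ref{lem:1bound} with an explicit construction. Since that lemma guarantees $\gamma_{uc}(G,c)\ge\gamma(G)$ for every coloring $c$, it suffices to exhibit a single coloring $c$ together with an up--color dominating $c$--set of size exactly $\gamma(G)$.

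The natural candidate for the set is a minimum dominating set $D$ of $G$, with $|D|=\gamma(G)$. To turn $D$ into an up--color dominating $c$--set, I would design a proper coloring of $G$ in which every vertex of $D$ receives a color strictly larger than every vertex of $V\setminus D$. Concretely: properly color the induced subgraph $G[V\setminus D]$ using colors from $\{0,1,\dots,k-1\}$ with $k=\chi(G[V\setminus D])$, and then properly color $G[D]$ using colors from $\{k,k+1,\dots,k+k'-1\}$ with $k'=\chi(G[D])$. Because the two palettes are disjoint, any edge joining $D$ to $V\setminus D$ automatically has endpoints of different colors, so the resulting map $c$ is a proper coloring of the whole of $G$.

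The verification is then straightforward. Any $v\notin D$ has, by definition of a dominating set, some neighbor $d\in D$, and by construction $c(v)<k\le c(d)$, which is condition~(1) in the definition of up--color dominating set; moreover condition~(2) holds because every color used on $D$ is at least $k\ge 1$. Hence $\gamma_{uc}(G,c)\le |D|=\gamma(G)$, and Lemma~\ref{lem:1bound} supplies the matching lower bound.

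I do not anticipate any real obstacle. The only point requiring a little care is the degenerate case $V\setminus D=\emptyset$ (so $k=0$): there condition~(1) is vacuous, and one simply shifts the palette on $D=V$ to start at $1$ instead of $0$ so that condition~(2) is still met.
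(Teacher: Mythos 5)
Your proof is correct and follows essentially the same route as the paper: fix a minimum dominating set $D$ and construct a proper coloring in which every vertex of $D$ gets a strictly larger color than every vertex of $V\setminus D$, so that $D$ itself is an up--color dominating $c$--set, then conclude with the lower bound $\gamma_{uc}(G,c)\geq\gamma(G)$ from Lemma~\ref{lem:1bound}. The only difference is cosmetic: the paper lifts the colors of $D$ within an existing coloring by adding the top color (which is what directly yields the $2\chi(G)-1$ bound of Corollary~\ref{cor:chid}), whereas you recolor $G[D]$ and $G[V\setminus D]$ with disjoint stacked palettes, using up to $\chi(G[D])+\chi(G[V\setminus D])\leq 2\chi(G)$ colors, which proves the proposition just as well.
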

\begin{proof} Let $c_0$ be a coloring of a graph $G=(V,E)$. If $\max_{\{v\in G\}}c_0(v)=k$, and considering a dominating set $D$ of $G$, we can define the following coloring of $G$. First of all, we rearrange $c_0$ in such a way that $c_0^{-1}(k)=\{v \in D: c_0(v)=k\} \neq \emptyset $

$
c(v)= \left\{ \begin{array}{lcc}
             c_0(v)+k &   \textrm{if}  & v \in D \setminus c^{-1}(k)\\
             c_0(v) &  & \textrm{otherwise}
             \end{array}
   \right.$.

And it is straightforward to check that $c$ is a coloring and $D$ is an up--color dominating $c$--set.
\end{proof}

The smallest number of colors, $\chi_{uc}(G)$, needed to preserve $\gamma_{uc}(G,c)=\gamma(G)$ will be called {\em the chromatic up--color domination number} of $G$. As a direct consequence of the proof of Proposition \ref{prop:equal}, we obtain:

\begin{corollary} \label{cor:chid}
For any graph $G$, $\chi(G) \leq \chi_{uc}(G) \leq 2 \chi (G)-1 $.
\end{corollary}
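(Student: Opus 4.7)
The plan is to handle the two inequalities separately. The lower bound $\chi(G)\leq\chi_{uc}(G)$ is immediate: by the paper's convention, any map $c$ admissible in the definition of $\chi_{uc}(G)$ is itself a proper vertex coloring, so it uses at least $\chi(G)$ distinct colors.

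For the upper bound the idea is to apply a slight refinement of the construction in the proof of Proposition~\ref{prop:equal}, starting from an optimal $\chi(G)$-coloring $c_0$ (using colors in $\{1,\ldots,k\}$ with $k=\chi(G)$) together with a minimum dominating set $D$, so that $|D|=\gamma(G)$. I would define
\[
c(v) = \begin{cases} k + c_0(v) & \text{if } v\in D \text{ and } c_0(v)<k,\\ k & \text{if } v\in D \text{ and } c_0(v)=k,\\ c_0(v) & \text{if } v\notin D.\end{cases}
\]
The image of $c$ is contained in $\{1,\ldots,2k-1\}$, so $c$ uses at most $2\chi(G)-1$ colors, which is exactly the claimed bound.

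What is left to verify is that $c$ is a proper coloring and that $D$ is an up--color dominating $c$--set; once this is done, Lemma~\ref{lem:1bound} gives $\gamma_{uc}(G,c)\geq\gamma(G)=|D|$ and hence equality, so $\chi_{uc}(G)\leq 2\chi(G)-1$. Properness reduces to a short case analysis on an adjacent pair $\{u,v\}$, separated by whether each of $u,v$ lies in $D$ and whether its $c_0$--color equals $k$; in every subcase the properness of $c_0$ rules out the potential conflict. For the up--color condition, every $v\notin D$ has a neighbor $u\in D$ because $D$ dominates $G$, and the properness of $c_0$ forbids $c_0(u)=c_0(v)=k$, so checking $c(u)>c(v)$ in each subcase finishes the argument.

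The subtlety I expect is the boundary case $v\notin D$ with $c_0(v)=k$: the construction in Proposition~\ref{prop:equal} handles it by first rearranging $c_0$ so that $c_0^{-1}(k)\subseteq D$, but such a rearrangement may in general require additional colors and would jeopardise the target bound $2\chi(G)-1$. The definition above circumvents the rearrangement by letting such a $v$ keep the color $k$, and the up--color requirement is rescued by the fact that its $D$--neighbors necessarily have $c_0$--color strictly less than $k$, placing them in the strictly higher range $\{k+1,\ldots,2k-1\}$.
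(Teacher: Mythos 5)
Your proposal is correct and follows essentially the paper's route: start from an optimal coloring and a minimum dominating set $D$, lift the colors of the vertices of $D$ by $\chi(G)$ so that $D$ becomes an up--color dominating set using at most $2\chi(G)-1$ colors, and combine with the trivial lower bound $\gamma_{uc}(G,c)\geq\gamma(G)$. The only deviation is your treatment of the top color class: where the paper first ``rearranges'' $c_0$ so that $c_0^{-1}(k)\subseteq D$, you let such vertices keep color $k$ and observe that any non--$D$ vertex of color $k$ is dominated by a $D$--vertex of $c_0$--color strictly below $k$, hence of new color above $k$ --- a sound (and arguably cleaner) handling, since the paper's rearrangement need not be achievable within $\chi(G)$ colors for a prescribed $D$.
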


\begin{remark}
The bounds given in Corollary \ref{cor:chid} are sharp. Establishing examples illustrating the trivial nature of the lower bound is straightforward. To demonstrate the tightness of the upper bound, consider the graph $G$ constructed as follows: to each vertex of $K_{n+1}$ (excluding one vertex denoted as $v$), attach a $K_n$. The only optimal dominating set $D$ is the set defined by all the vertices but $v$ of the original $K_{n+1}$. Now it is straightforward to check that $\chi_{uc}(G) = 2 \chi (G)-1 $.
\end{remark}

The study of graph colorability and its computational complexity has been a cornerstone in graph theory. Determining whether a graph is \( k \)-colorable or finding optimal colorings are well-known NP-complete problems. Such problems serve as a foundation for exploring the complexity of related graph parameters, including domination in colored graphs. Building on this perspective, the following problem delves into the NP-hardness of the up–color domination number.

\noindent \textbf{\textsc{chromatic up--color domination number}} \\
{\sc instance:} A graph $G=(V,E)$ and a positive integer $k$. \\
{\sc question:} Is $\chi_{uc}(G) \leq k$?

And the following theorem focuses on the analysis of the complexity associated with solving the previous problem.

\begin{theorem} \label{th:npchi}
\textsc{\emph{chromatic up--color domination number}} is an NP--hard problem. Even for $k=3$.
\end{theorem}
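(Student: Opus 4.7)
The plan is to establish NP-hardness by a polynomial-time reduction from the classical NP-complete problem \textsc{3-coloring}. Given a graph $G=(V,E)$ with $|V|=n$, build $G'$ by attaching a distinct pendant vertex $p_v$ to every $v\in V$, i.e., $V(G')=V\cup\{p_v:v\in V\}$ and $E(G')=E\cup\{vp_v:v\in V\}$. This transformation is polynomial, so it suffices to prove that $\chi_{uc}(G')\leq 3$ if and only if $\chi(G)\leq 3$.

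First I would pin down $\gamma(G')$ and describe its minimum dominating sets. Because each pendant $p_v$ has only $v$ as neighbor, every dominating set of $G'$ meets every pair $\{v,p_v\}$ in at least one vertex, so $\gamma(G')\geq n$; the set $V$ realizes the lower bound, giving $\gamma(G')=n$. Consequently, every minimum dominating set has the form $D_S:=S\cup\{p_v:v\notin S\}$ for some $S\subseteq V$. One direction of the equivalence is then immediate: any coloring of $G'$ with at most three colors restricts to a proper coloring of $G$ with at most three colors, so $\chi_{uc}(G')\leq 3$ implies $\chi(G)\leq 3$.

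For the converse, given a proper 3-coloring $c_G:V\to\{1,2,3\}$ of $G$, set $S:=c_G^{-1}(\{2,3\})$ and extend $c_G$ to a map $c:V(G')\to\{1,2,3\}$ by $c(v)=c_G(v)$, $c(p_v)=1$ for $v\in S$, and $c(p_v)=2$ for $v\notin S$. The coloring $c$ is proper (each pendant differs from its unique neighbor), and I would check that $D_S$ is an up--color dominating $c$-set of size $n$: the pendants of vertices in $S$ have color $1$ and are dominated in $D_S$ by their parents of color $\geq 2$; the color-$1$ vertices of $V\setminus S$ are dominated by their pendants of color $2$ in $D_S$; all remaining vertices lie in $D_S$. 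Together with Lemma~\ref{lem:1bound} (which supplies $\gamma_{uc}(G',c)\geq \gamma(G')$), this yields $\gamma_{uc}(G',c)=\gamma(G')=n$, and therefore $\chi_{uc}(G')\leq 3$.

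The main obstacle is designing a gadget that simultaneously preserves $3$-colorability, pins down $\gamma(G')$ so that its minimum dominating sets admit an explicit description, and leaves enough flexibility in the choice of minimum dominating set to mirror an arbitrary proper $3$-coloring of $G$. The pendant gadget discharges all three requirements at once: $\chi(G')=\max(\chi(G),2)\leq 3$ precisely when $\chi(G)\leq 3$; the value $\gamma(G')=n$ is forced; and the minimum dominating sets $D_S$ are parameterized by arbitrary subsets $S\subseteq V$, which supplies exactly the room needed to encode the color classes of a proper $3$-coloring of $G$.
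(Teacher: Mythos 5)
Your proposal is correct, and it takes a genuinely different route from the paper. You reduce directly from \textsc{3--coloring} with a pendant (corona-type) gadget: attaching one leaf per vertex forces $\gamma(G')=n$ with the minimum dominating sets parameterized by arbitrary $S\subseteq V$, and then any proper $3$--coloring of $G$ can be mirrored by the set $S$ of non-lowest color classes, with pendants colored so that $D_S$ up--color dominates; Lemma~\ref{lem:1bound} closes the argument, and the converse direction is immediate because a witness coloring for $\chi_{uc}(G')\leq 3$ restricts to a proper $3$--coloring of $G$. The paper instead reduces from \textsc{3--sat}, reusing the Garey--Johnson--Stockmeyer $3$--colorability gadget graph and then analyzing its minimum dominating sets (claiming $\gamma=\min\{2s+1,2r+s+1\}$) to argue that the top color can be placed exactly on such a set when the graph is $3$--colorable. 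Your construction buys simplicity and transparency: the domination number and the structure of minimum dominating sets are trivially pinned down, whereas the paper's gadget analysis is only sketched; interestingly, your pendant idea is close in spirit to the ``hairy graph'' device the paper itself uses later (Proposition~\ref{prop:hairy}). One small remark: your coloring uses labels $\{1,2,3\}$, which makes condition (2) of the definition (no color-$0$ vertex in $D$) vacuous; this is allowed by the paper's definition of a $k$--coloring, and in any case the argument survives relabeling to $\{0,1,2\}$ (take $S$ to be the two non-zero color classes, color pendants of $S$-vertices with $0$ and pendants of color-$0$ vertices with $1$), so $D_S$ still avoids color $0$.
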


\begin{proof} 

We are going to make a reduction from \textsc{3--sat}. Given an instance $\phi$ of \textsc{3--sat}, let $c_1, c_2, \ldots, c_s$ be the clauses defined over variables $\{u_1,u_2, \ldots, u_r \}$. We are going to build the same graph used to prove the NP--hardness of 3--coloring.
The gadgets employed are analogous to those presented by Garey, Johnson, and Stockmeyer~\cite{Garey1976} in their pioneering work on NP-complete problems.  
The components of that graph $G=(V,E)$ are the following: first of all, we first create a triangle with its vertices labeled as $\{T, F, B\}$ where $T$ stands for True, $F$ for False and $B$ for Base. Then, we add two new vertices $u_i$ and $\overline{u_i}$ for every variable $u_i$ and join with edges these two vertices and with the original vertex labeled as $B$ (obtaining, in this way $r$ new triangles, one for each variable), see Figure \ref{fig:chi-var}. 
Now, we are going to create a new gadget for each clause. That gadgets are constructed as Figure \ref{fig:chi-clau} shows (and Figure \ref{fig:chi-int} represents how the whole graph $G$ looks like). 

Now, it is not hard to check that $\chi(G)=\min \{2s+1,2r+s+1 \}$. In the first case, we need at least one vertex for each one of the $2s$ triangles appearing in the clause--gadgets plus an additional vertex --the one labeled as $B$-- to dominate the remaining vertices. In the second case, we can choose for the minimum dominating set all the $2r$ vertices obtained from variables, plus one vertex for each clause--gadget, and the vertex $F$ of the first triangle $\{T,F,B\}$.  

First of all, assume $\min \{ 2s+1,2r+s+1\}=2s+1$. That means, as we said before, that a minimum dominating set is just choosing one vertex for each one of the $2s$ triangles appearing in the clause--gadget plus the vertex labeled as $B$. But it is not difficult to see that if $G$ is $3$--colorable (let's say with colors $\{0,1,2\}$, then we can assign the color $2$ just to those vertices (there is not a triangle with two of its vertices in the dominating set). On the other hand, the same happens when $\min \{ 2s+1,2r+s+1\}=2r+s+1$, and we can assign, if $G$ is $3$--colorable, the color $2$ to the vertices in the minimum dominating set. 

Thus, in both cases, if the graph $G$ is $3$--colorable, we can assign the color $2$ to just the vertices in the minimum dominating set, but to decide if that kind of graph is $3$--colorable is equivalent to solve the initial instance of \textsc{3--sat}.

      \begin{figure}[ht] 
	\centering
\includegraphics[width=0.6\textwidth]{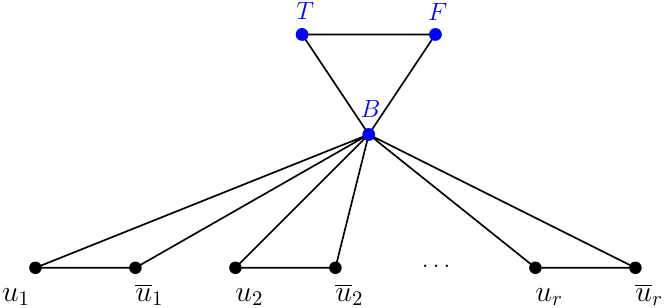}
	\caption{The gadget for the variables. Theorem \ref{th:npchi}.}
 \label{fig:chi-var}
\end{figure}

\begin{figure}[ht] 
	\centering
\includegraphics[width=0.55\textwidth]{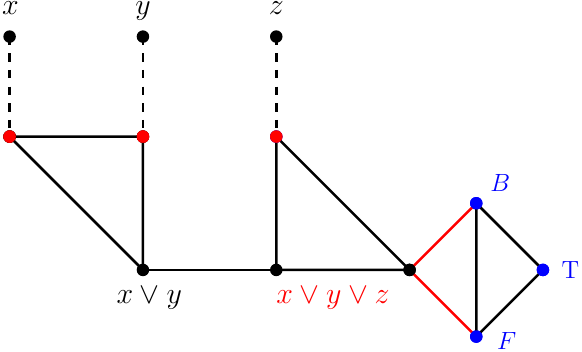}
	\caption{The gadget for the clauses. Theorem \ref{th:npchi}.}
 \label{fig:chi-clau}
\end{figure}

\begin{figure}[ht] 
	\centering
\includegraphics[width=1\textwidth]{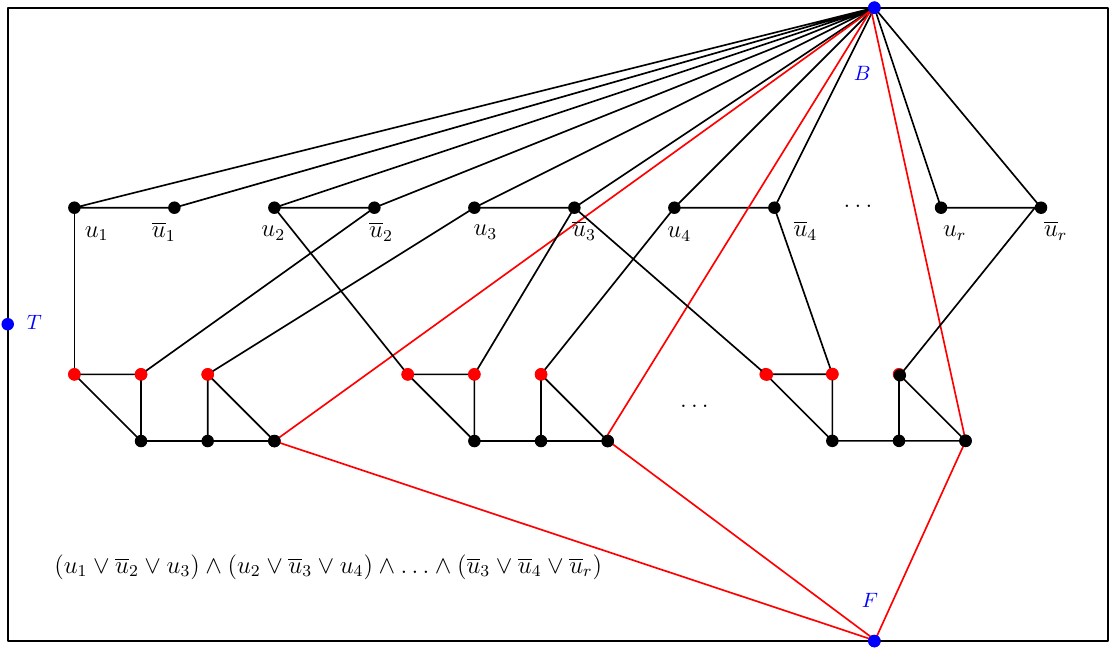}
	\caption{An example of integrated gadgets. Theorem \ref{th:npchi}.}
 \label{fig:chi-int}
\end{figure}
    
\end{proof}

Now some bounds for $\gamma_{uc} (G,c)$ are provided.

\begin{proposition} \label{prop:ineq}
For any graph $G$ and any coloring $c$, $\gamma(G) \leq \gamma_{uc} (G,c) \leq \chi (\bar{G})$. Where $\bar{G}$ represents the complement of $G$.
\end{proposition}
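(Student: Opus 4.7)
The plan is to establish the two inequalities separately. For the lower bound, observe that condition~(1) in the definition of an up--color dominating $c$--set forces every vertex outside $D$ to have a neighbor in $D$, so any such $D$ is a classical dominating set of $G$; taking the minimum size yields $\gamma(G)\le\gamma_{uc}(G,c)$. This is also essentially already contained in Lemma~\ref{lem:1bound2}.

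For the upper bound I would use the well--known identity $\chi(\bar G)=\theta(G)$, where $\theta(G)$ is the clique covering number of $G$: a proper coloring of $\bar G$ with $\chi(\bar G)$ colors partitions $V(G)$ into independent sets of $\bar G$, which are cliques of $G$. Fix such a clique cover $\{K_1,\ldots,K_{\theta(G)}\}$ of $G$ and construct a candidate set $D$ by choosing from each clique $K_i$ a representative $d_i$ of maximum color in $K_i$. Since $c$ is a proper coloring, the colors within $K_i$ are pairwise distinct, so for every $v\in K_i\setminus\{d_i\}$ the vertex $d_i$ is adjacent to $v$ and $c(v)<c(d_i)$. Hence $|D|\le\theta(G)=\chi(\bar G)$ and condition~(1) is immediate for every $v\notin D$.

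The one delicate point — and the main obstacle I anticipate — is condition~(2), which forbids vertices of color~$0$ from $D$. For any clique $K_i$ of size at least $2$ the maximum--color vertex automatically has color $\ge 1$, because a proper coloring on a clique of size $s$ uses $s$ distinct nonnegative integers. The only problematic case is a singleton $K_i=\{v\}$ with $c(v)=0$; but under the standing assumption that $\gamma_{uc}(G,c)$ is well defined, $v$ cannot be a local maximum, so $v$ has a neighbor $u$ and properness of $c$ forces $c(u)\ge 1$. I would resolve this edge case by redefining $d_i:=u$, which preserves $|D|\le\theta(G)$ (the $d_i$'s may coincide, never increasing the count), keeps $D$ free of color--$0$ vertices, and still covers $v$ in the up--color sense. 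This yields an up--color dominating $c$--set of size at most $\chi(\bar G)$ and completes the proof.
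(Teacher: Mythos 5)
Your proof is correct and follows essentially the same route as the paper: the lower bound is immediate because every up--color dominating $c$--set is an ordinary dominating set, and the upper bound selects the maximum--color vertex from each clique of a clique cover of size $\theta(G)=\chi(\bar{G})$. Your additional treatment of singleton cliques whose only vertex has color $0$ (to secure condition (2)) is a sound refinement of an edge case that the paper's terse proof leaves implicit.
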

\begin{proof}
The first inequality is trivial by definition, and the second inequality follows from the fact that  for any clique in $G$ and any coloring $c$, we only need to choose the vertex with the highest color in that clique (recall that the clique covering number $\theta(G)$ of a graph $G$ is the minimum number of cliques in $G$ needed to cover the vertex set of $G$. And that number is also given by $\theta(G)=\chi(\bar{G})$).
\end{proof}

The independent domination number of $G$, denoted by $i(G)$, is the minimum size of an independent dominating set.  It follows immediately that $\gamma (G) \leq i(G) \leq \alpha (G)$, where $\alpha (G)$ is the independece number. See \cite{2013-gh-idgsrs} for a survey on independent domination in graphs.

We know compute those parameters considered so far for some common graph families.

\begin{proposition} \label{prop:path}
Let $P_n$ be the path with $n$ vertices. Then, 
\begin{enumerate}
    \item For any coloring $c$, $\lceil n/3 \rceil \leq \gamma_{uc}(P_n,c)\leq \lceil n/2\rceil$.
    \item For any optimal coloring $c$, $\lfloor n/2 \rfloor \leq \gamma_{uc}(P_n,c)\leq\lceil n/2\rceil$.
    \item $\chi_{uc}(P_n)=3$ ($n\geq 5$).
    \end{enumerate}
\end{proposition}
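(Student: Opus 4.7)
The three parts share a common backbone: the classical formula $\gamma(P_n) = \lceil n/3 \rceil$ together with Lemma~\ref{lem:1bound2}. I would treat the parts in order.

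For part (1), the lower bound is immediate from Lemma~\ref{lem:1bound2}: $\gamma_{uc}(P_n,c) \geq \gamma(P_n) = \lceil n/3 \rceil$. For the upper bound, I would produce an up-color dominating $c$-set of size at most $\lceil n/2 \rceil$ by the following pairing argument. Group consecutive vertices as $(v_1,v_2),(v_3,v_4),\ldots$ and from each pair pick the vertex with the larger color; if $n$ is odd, add $v_n$ as well (or swap $v_n$ for $v_{n-1}$ in the degenerate case $c(v_n)=0$, to keep the set free of color-$0$ vertices). Each vertex outside this set is the smaller-colored member of its pair, hence is up-color dominated by its neighboring partner; and any vertex of color $0$ is automatically excluded since in its pair it is necessarily the smaller one.

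For part (2), since $c$ is a proper $2$-coloring of $P_n$, write its two colors as $a<b$. Every vertex of color $b$ has all its neighbors of color $a$, so $c^{-1}(b) \subseteq M_c(P_n)$; conversely $c^{-1}(b)$ is itself an up-color dominating $c$-set, because every color-$a$ vertex has all of its neighbors in $c^{-1}(b)$. Combining this observation with Lemma~\ref{lem:1bound2} yields $\gamma_{uc}(P_n,c) = |c^{-1}(b)|$. Because any proper $2$-coloring of a path alternates, this cardinality is $\lfloor n/2 \rfloor$ if $v_1$ has color $a$ and $\lceil n/2 \rceil$ if $v_1$ has color $b$, which gives the two bounds stated.

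For part (3), the upper bound $\chi_{uc}(P_n) \leq 3$ is already guaranteed by Corollary~\ref{cor:chid} since $\chi(P_n)=2$; more concretely, I would fix a minimum dominating set $D$ of $P_n$ consisting of vertices whose indices are congruent to $2$ modulo $3$ (adjusting the tail according to $n \bmod 3$), assign color $3$ to the vertices of $D$, and complete to a proper coloring with the remaining colors $\{1,2\}$ in such a way that every vertex outside $D$ has a color-$3$ neighbor. This exhibits $D$ as an up-color dominating $c$-set of size $\gamma(P_n)$. For the lower bound, if a $2$-coloring $c$ achieved $\gamma_{uc}(P_n,c) = \gamma(P_n)$, part~(2) would force $\lfloor n/2 \rfloor \leq \lceil n/3 \rceil$, which can be ruled out by an easy arithmetic check in the relevant range of $n$.

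The main obstacle is the construction in part (3): a single clean description of the $3$-coloring must simultaneously be a proper coloring, place color $3$ precisely on a $\gamma$-set, and handle the three residues of $n$ modulo $3$ at the two ends of the path. A secondary subtlety in part (1) is the interaction between the pairing construction and the prohibition of color $0$ in $D$, which may force a local swap near a color-$0$ endpoint when $n$ is odd.
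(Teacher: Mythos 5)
Your treatment of parts (1) and (2), and of the upper bound in part (3), is essentially the paper's own proof: the lower bound in (1) comes from Lemma~\ref{lem:1bound} (or Lemma~\ref{lem:1bound2}) together with $\gamma(P_n)=\lceil n/3\rceil$, the upper bound from pairing consecutive vertices and keeping the higher--colored vertex of each pair (you are in fact more careful than the paper about the leftover vertex of an odd path and the color--$0$ prohibition); part (2) is exactly the observation that for a $2$--coloring the class with the larger color is both forced into and sufficient as an up--color dominating set; and your ``color $3$ on the vertices with index $\equiv 2 \pmod 3$'' construction is the paper's ``extra color on the central vertex of each group of three''.

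The genuine gap is in your lower bound for part (3). You claim that $\gamma_{uc}(P_n,c)=\gamma(P_n)$ for a $2$--coloring would force $\lfloor n/2\rfloor \le \lceil n/3\rceil$, ``which can be ruled out by an easy arithmetic check in the relevant range of $n$''. That arithmetic does not rule it out for $n=5$ and $n=7$, where $\lfloor n/2\rfloor=\lceil n/3\rceil$ (equal to $2$ and $3$, respectively). Worse, for these two values the obstruction is real, not an artifact of the bound: the $2$--coloring of $P_5$ giving the larger color to $\{v_2,v_4\}$ makes $\{v_2,v_4\}$ an up--color dominating set of size $2=\gamma(P_5)$, and likewise $\{v_2,v_4,v_6\}$ works for $P_7$, so no argument via part (2) --- indeed no argument at all --- can yield $\chi_{uc}>2$ there. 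Hence your proof establishes $\chi_{uc}(P_n)=3$ only for $n=6$ and $n\ge 8$, and the step fails for $n\in\{5,7\}$. Note that the paper itself only asserts ``Obviously, $\chi_{uc}(P_n)>2$'' without justification, and your more explicit attempt exposes that this assertion is problematic at exactly these two values.
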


\begin{proof}
1. 
Remind $\gamma (P_n) = i(P_n)= \lceil n/3 \rceil$ (and $\alpha (P_n)=\lceil n/2 \rceil$), thus, by using Lemma \ref{lem:1bound}, we have the lower bound. For the upper bound, we can match each pair of consecutive vertices and choose, in each pair, that one with the highest color.

2. If $c$ is an optimal coloring, both chromatic classes have, at least, $\lfloor n/2 \rfloor$ vertices and the class with the highest color is, in this case, a $c$--dominating set.

3. Obviously, $\chi_{uc}(P_n)>2$. However, for any optimal coloring of $G$, we can match the vertices in groups of $3$ and assign an extra color to the central vertex of each group.
\end{proof}

\begin{proposition} \label{prop:cycle}
Let $C_n$ be the cycle with $n$ vertices. Then, for any coloring $c$, $\lceil n/3 \rceil \leq \gamma_{uc}(C_n,c)\leq \lfloor n/2\rfloor$, and $\chi_{uc}(C_n)=3$.

\end{proposition}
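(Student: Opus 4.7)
The plan is to mirror the structure of Proposition \ref{prop:path} for paths, but with one extra twist to handle the cyclic topology when $n$ is odd. For the lower bound $\gamma_{uc}(C_n,c)\geq \lceil n/3\rceil$, I would simply invoke $\gamma(C_n)=\lceil n/3\rceil$ together with Lemma \ref{lem:1bound} (equivalently, the first inequality of Proposition \ref{prop:ineq}), so nothing further is required there.

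For the upper bound $\gamma_{uc}(C_n,c)\leq \lfloor n/2\rfloor$, I would split on the parity of $n$. When $n$ is even, I would walk around the cycle and partition the vertices into $n/2$ consecutive adjacent pairs $(v_{2i-1},v_{2i})$; since each pair is an edge, its two vertices receive distinct colors, and selecting the one with higher color from each pair produces an up-color dominating $c$-set of size $n/2$. When $n$ is odd the perfect pairing fails, so I would first pick a vertex $w$ attaining $\max_{v\in V}c(v)$. As the coloring is proper, both neighbors of $w$ have strictly lower color and are therefore dominated by $w$. Deleting $w$ and its two neighbors leaves a path on $n-3$ vertices with the induced coloring, to which I would apply Proposition \ref{prop:path}(1) to obtain an up-color dominating set of size at most $\lceil(n-3)/2\rceil=(n-3)/2$. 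Adding $w$ yields a set of size $(n-1)/2=\lfloor n/2\rfloor$ that up-color dominates $C_n$, since the two path endpoints keep the same neighbors inside the path and their potential cycle neighbors outside the path are exactly $w$'s neighbors, already dominated by $w$.

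For $\chi_{uc}(C_n)=3$, the lower bound splits naturally. If $n$ is odd then $\chi(C_n)=3$ and Corollary \ref{cor:chid} already gives $\chi_{uc}(C_n)\geq 3$. If $n$ is even with $n\geq 6$, the only $2$-coloring is the alternating one, whose $\gamma_{uc}$--dominating $c$--set must contain the whole color class of highest color, of size $n/2>\lceil n/3\rceil=\gamma(C_n)$, so two colors cannot preserve $\gamma_{uc}=\gamma$. For the matching upper bound I would produce an explicit $3$-coloring $c$ with $\gamma_{uc}(C_n,c)=\lceil n/3\rceil$: choose a minimum dominating set $D$ of $C_n$ by picking roughly every third vertex around the cycle, assign color $3$ to every vertex of $D$ (safe because $D$ is independent by construction) and color the remaining vertices of each gap alternately with $1$ and $2$, so that $D$ is an up-color dominating $c$-set of cardinality $\lceil n/3\rceil=\gamma(C_n)$.

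The main obstacle I anticipate is the modular case analysis hidden in this last construction: when $n\equiv 1$ or $n\equiv 2\pmod 3$ the set $D$ cannot be placed at perfectly equal spacing, so one or two gaps must have an exceptional length and I need to verify that the $\{1,2\}$-alternation along the gaps still closes up around the cycle into a proper coloring that respects the color $3$ assigned to $D$. Everything else reduces to the pairing idea already used for paths, plus the deletion-of-a-global-maximum trick that converts the odd cycle to a path.
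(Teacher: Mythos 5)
Your proof is correct and follows essentially the same route as the paper, which simply states that the argument is ``similar to Proposition~\ref{prop:path}'' using $\gamma(C_n)=i(C_n)=\lceil n/3\rceil$ and $\alpha(C_n)=\lfloor n/2\rfloor$; your odd-$n$ device (delete a vertex of maximum color together with its two neighbors and pair up the remaining path) and the explicit $3$-coloring built around a minimum dominating set are just the details the paper leaves implicit, and they check out. The only caveat is that, as in Proposition~\ref{prop:path}(3), the claim $\chi_{uc}(C_n)=3$ requires $n\geq 5$ (indeed $\chi_{uc}(C_4)=2$, since the alternating $2$-coloring of $C_4$ already gives $\gamma_{uc}=2=\gamma(C_4)$), a restriction your case split ($n$ odd, or $n$ even with $n\geq 6$) implicitly respects.
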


\begin{proof}
The proof is similar to Proposition \ref{prop:path} keeping in mind 
$\gamma (C_n) = i(G)= \lceil n/3 \rceil$ and $\alpha (G)=\lfloor n/2 \rfloor$.
\end{proof}

\begin{proposition} \label{prop:bipartite}
Let $K_{r,s}$ be the complete bipartite graph with $r+s$ vertices. Then, 
\begin{enumerate}
    \item For any coloring $c$, $2 \leq \gamma_{uc}(K_{r,s},c)\leq \max \{ r,s \}$.
    \item For any optimal coloring $c$, $  \gamma_{uc}(K_{r,s},c) \in \{\min \{ r,s \},\max \{ r,s \}\}$.
    \item $\chi_{uc}(K_{r,s})=3$
\end{enumerate}
\end{proposition}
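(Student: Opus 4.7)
Each of the three items calls for a separate short argument, and my plan is to tackle them in the stated order, leaning heavily on the general bounds already established in the paper.

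For item 1, the lower bound follows immediately from Lemma \ref{lem:1bound}, since $\gamma(K_{r,s})=2$ whenever $r,s\geq 2$, so any up--color dominating $c$--set has at least two vertices. For the upper bound I would apply Proposition \ref{prop:ineq} after the simple observation that $\overline{K_{r,s}}$ is the disjoint union of $K_r$ and $K_s$, so $\chi(\overline{K_{r,s}})=\max\{r,s\}$, giving $\gamma_{uc}(K_{r,s},c)\leq\max\{r,s\}$ for every coloring $c$.

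For item 2, the crucial fact is that an optimal coloring uses exactly $\chi(K_{r,s})=2$ colors, and since each color class must be independent in $K_{r,s}$, those two classes must coincide with the two sides $A$ and $B$ of the bipartition (of sizes $r$ and $s$). Any $\gamma_{uc}$--dominating $c$--set must contain every vertex of the highest color, as noted right after the definition; here that entire highest--colored side already dominates the lower--colored side via the complete bipartite edges, so the optimal $D$ is exactly that side, giving $\gamma_{uc}(K_{r,s},c)\in\{r,s\}=\{\min\{r,s\},\max\{r,s\}\}$.

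For item 3, the lower bound $\chi_{uc}(K_{r,s})\geq 3$ is essentially the contrapositive of item 2: with only two colors, $\gamma_{uc}\in\{r,s\}$, and when $\min\{r,s\}\geq 3$ this is strictly greater than $\gamma(K_{r,s})=2$, forcing the use of a third color. For the matching upper bound I would exhibit an explicit $3$--coloring: assign color $3$ to one vertex $a_0\in A$, color $1$ to the remaining vertices of $A$, and color $2$ to all of $B$. This is a proper coloring, and for any fixed $b\in B$ the set $D=\{a_0,b\}$ is an up--color dominating $c$--set, since $a_0$ (color $3$) dominates every vertex of $B$ (color $2$) and $b$ (color $2$) dominates the color--$1$ vertices of $A$. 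Thus $\gamma_{uc}(K_{r,s},c)=2=\gamma(K_{r,s})$, so $\chi_{uc}(K_{r,s})\leq 3$.

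The only subtlety I expect is the handling of small cases. The lower bounds in items 1 and 3 rely on $\min\{r,s\}\geq 2$ and $\min\{r,s\}\geq 3$ respectively, so degenerate configurations (stars, $K_{2,2}$, $K_{2,s}$) either satisfy the claims trivially or need to be excluded by a standing hypothesis; once this is dispatched, every remaining step reduces to previously proved bounds or to the explicit construction above.
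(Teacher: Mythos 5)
Your proof is correct and follows essentially the same route as the paper's (very terse) argument: lower bounds from Lemma \ref{lem:1bound} together with $\gamma(K_{r,s})=2$, the identification of the two color classes with the two sides for item 2 (so $\gamma_{uc}$ equals the size of the top-colored side), and an explicit coloring achieving $\gamma_{uc}=\gamma$ with three colors for item 3. Your use of Proposition \ref{prop:ineq} via $\chi(\overline{K_{r,s}})=\max\{r,s\}$ for the upper bound in item 1 is numerically the same as the paper's appeal to $\alpha(K_{r,s})=\max\{r,s\}$, and your closing remark about small cases ($K_{1,s}$, $K_{2,2}$, $K_{2,s}$) correctly flags an implicit hypothesis that the paper's own one-line proof also leaves unstated.
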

\begin{proof}
The proof is straightforward by using Lemma \ref{lem:1bound} and the facts  $\gamma (K_{r,s}) = 2$, $ i(K_{r,s})= \min \{ r,s \}$ and $\alpha (G)=\max \{ r,s \}$.
\end{proof}

Observe that, in the three cases, for any optimal coloring $\gamma_{uc}(G)$ takes, at most, only two values. Of course, this is not valid for any graph, for example, the graph depicted in Figure \ref{fig:casita} has optimal colorings where the classes with the highest color can have $2$, $3$ or $4$ elements and in the three cases that class is also dominant.

\begin{figure}[ht] 
	\centering
\includegraphics[width=0.7\textwidth]{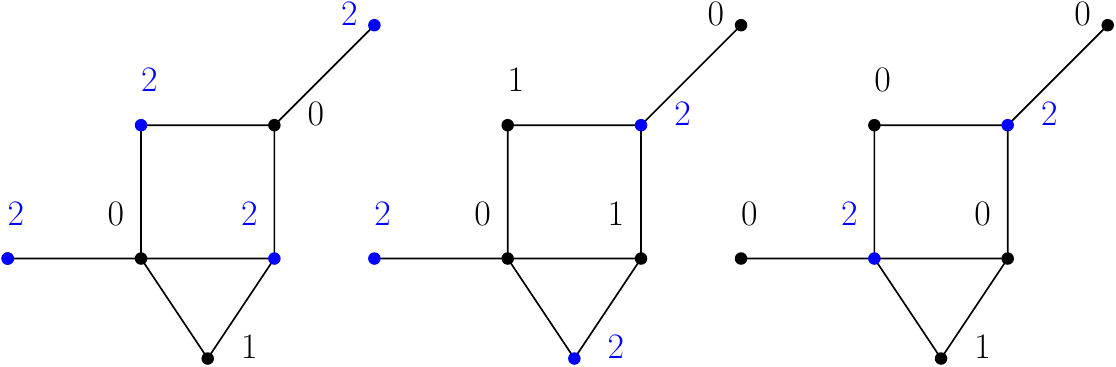}
	\caption{Three optimal colorings for the same graph.}
 \label{fig:casita}
\end{figure}


Next, we study the graph class of trees and prove that it is possible to obtain $\gamma_{uc}$--dominating $c$--sets in linear time. 

First of all, It is important to note that, even in a tree, the number of $\gamma_{uc}$--dominating $c$--set can be exponentially large as Figure \ref{fig:domsetexptree} shows. In spite of that fact, it is possible to find a $\gamma_{uc}$--dominating $c$--set in linear time as we will see next. Firstly, we need some technical lemmas.

\begin{figure}[ht] 
	\centering
\includegraphics[width=0.7\textwidth]{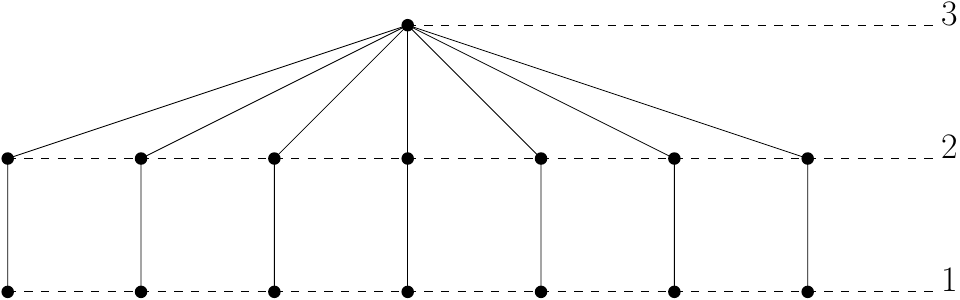}
	\caption{In this tree, the root, with color $3$, must be in any $\gamma_{uc}$--dominating $c$--set, but then, it is possible to include either each vertex with the color $2$ or its child. So, the number of $\gamma_{uc}$--dominating $c$--sets is exponential.}
 \label{fig:domsetexptree}
\end{figure}

\begin{remark} \label{remark:localmax}
    Obviously, if $D$ is a $\gamma_{uc}$--dominating $c$--set and $M$ is the set of local maxima in $(T,c)$, then $M \subseteq D$.
\end{remark}

\begin{lemma} \label{lem:treedom1}
Given a pair $(T,c)$ where $T$ is a tree, there exists always a $\gamma_{uc}$--dominating $c$--set with no leaves other than the possible local maxima.
\end{lemma}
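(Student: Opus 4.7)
The plan is to argue by a local exchange on a minimum up--color dominating $c$--set. I would take a $\gamma_{uc}$--dominating $c$--set $D$ and, among all sets of size $\gamma_{uc}(T,c)$, pick one minimizing the number of leaves of $T$ that lie in $D$ and are not local maxima for $c$. Assuming for contradiction that this count is positive, I would fix such a leaf $\ell \in D$ and let $u$ be its unique neighbor in $T$. Since $\ell$ is not a local maximum, $c(\ell)<c(u)$, and because $\ell$ has only $u$ as a neighbor, $\ell$ cannot dominate any vertex of $T$ in the up--color sense (its only candidate would be $u$, but $c(u)>c(\ell)$).

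Next I would split into two cases according to whether $u\in D$. If $u\in D$, then simply removing $\ell$ still yields an up--color dominating $c$--set: the only vertex that needed $\ell$ to be dominated was $\ell$ itself, but now $\ell$ is dominated by $u\in D$ because $c(u)>c(\ell)$; this contradicts the minimality of $|D|$. If $u\notin D$, since $u$ must be dominated from $D$, there is some neighbor $w\in D$ of $u$ with $c(w)>c(u)$, and in particular $w\ne\ell$. I would then perform the swap $D' = (D\setminus\{\ell\})\cup\{u\}$ and check that $D'$ is again an up--color dominating $c$--set of the same size: vertices previously dominated by some $d\in D\setminus\{\ell\}$ are still dominated, $\ell$ itself is now dominated by $u\in D'$ (as $c(u)>c(\ell)$), and no vertex other than $\ell$ could have been dominated by $\ell$ anyway; also $u$ avoids color $0$ since $c(u)>c(\ell)\geq 1$.

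The key point of the swap is that $u$ has degree at least $2$ in $T$ (it has both $\ell$ and $w$ as neighbors), so $u$ is not a leaf; thus $D'$ has strictly fewer leaves that are not local maxima than $D$, contradicting the choice of $D$. Iterating (or simply invoking the chosen minimality) therefore gives a $\gamma_{uc}$--dominating $c$--set in which every leaf of $T$ that belongs to it is a local maximum for $c$, which is the claim.

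The main obstacle I expect is the careful bookkeeping in the exchange argument: one must verify that after deleting $\ell$ (and possibly adding $u$) every vertex outside the new set is still up--color dominated, and that we have not merely replaced one offending leaf by another. The leaf structure of $T$ together with Remark~\ref{remark:localmax} (which forces all local maxima to be in $D$, so they are never the $w$ we need) is what makes the swap safe; this is exactly where the acyclic, tree--like geometry is used, since in a general graph an analogous $\ell$ could have been useful for dominating vertices other than $u$.
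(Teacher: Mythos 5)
Your proof is correct and takes essentially the same route as the paper: among minimum up--color dominating $c$--sets choose one minimizing the number of leaves that are not local maxima, then exchange such a leaf $\ell$ for its unique neighbor $u$ (which has higher color) to contradict that choice. Your write-up is in fact somewhat more careful than the paper's, since you explicitly handle the case $u\in D$ (where simply deleting $\ell$ contradicts minimality of size) and verify that $u$ is not itself an offending leaf and has nonzero color, details the paper's terser argument leaves implicit.
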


\begin{proof}
    Let $D$ a $\gamma_{uc}$--dominating $c$--set with leaves which are not local maximum and the minimum number of these leaves possible.
    Let $v\in D$ leave and not local maximum, then $N(v)=\{u\}$, such that $c(u)>c(v)$. Now the set $D'=(D\setminus\{v\})\cup \{u\}$ is also a $\gamma_{uc}$--dominating $c$--set, but with less number of leaves not local maximum, contradicting the election of $D$.

\end{proof}

\begin{lemma}\label{lem:treedom2}
    Given a pair $(T,c)$ where $T$ is a tree, there exists always a $\gamma_{uc}$--dominating $c$--set $D$ such that if $v \in D$ and $N^-(v) \cap D \neq \emptyset$, then $\delta ^+(v)>1$.  
\end{lemma}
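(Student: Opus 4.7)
The plan is to use an exchange argument with a double extremal choice on $D$. Among all $\gamma_{uc}$--dominating $c$--sets of minimum cardinality $\gamma_{uc}(T,c)$, I would select $D$ that additionally minimizes the total color weight $\sum_{v \in D} c(v)$. I would then suppose, for contradiction, that some $v \in D$ has $N^-(v) \cap D \neq \emptyset$ while $\delta^+(v) \leq 1$; fix any $u \in N^-(v) \cap D$, and split into cases according to $\delta^+(v)$.

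If $\delta^+(v) = 0$, then $v$ dominates no vertex outside $D$, while $v$ itself is dominated by $u \in D$, so $D \setminus \{v\}$ is a smaller up--color dominating $c$--set, contradicting the minimality of $|D|$. If $\delta^+(v) = 1$, denote the unique out--neighbor by $w$; if $w \in D$, again $D \setminus \{v\}$ suffices. Otherwise, I would form $D' = (D \setminus \{v\}) \cup \{w\}$, which has the same cardinality as $D$. To check $D'$ dominates every vertex outside itself, take any $x \notin D'$. If $x = v$, then $u$ dominates $v$ and $u \in D'$ (note $u \neq w$ because $c(u) > c(v) > c(w)$, so $u \in D \setminus \{v\} \subseteq D'$). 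If $x \neq v$, then $x \notin D$, and some $y \in D \cap N^-(x)$ exists; if $y \neq v$ then $y \in D'$, and if $y = v$ then $x \in N^+(v) = \{w\}$, forcing $x = w \in D'$, a contradiction. Hence $D'$ is a valid $\gamma_{uc}$--dominating $c$--set with the same cardinality but with $\sum_{v' \in D'} c(v') = \sum_{v' \in D} c(v') + c(w) - c(v) < \sum_{v' \in D} c(v')$, contradicting the choice of $D$.

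The main obstacle I anticipate is the subtle corner case $c(w) = 0$, in which $w$ cannot be placed into $D'$ (since the definition forbids $D$ to contain vertices of color $0$), so the swap step is blocked. I expect this to be handled by exploiting the tree structure: either $w$ has an in--neighbor in $D$ other than $v$, in which case $D \setminus \{v\}$ is already dominating and contradicts minimality of $|D|$, or the uniqueness of $v$ as dominator of $w$ combined with the presence of $u \in N^-(v) \cap D$ can be used to build a different local modification of $D$ contradicting the extremality, so the exchange argument goes through in all cases.
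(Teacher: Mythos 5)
Your case analysis for $\delta^+(v)=0$, and for $\delta^+(v)=1$ when the unique out--neighbour $w$ either lies in $D$ or has $c(w)\geq 1$, is correct; in fact the $\delta^+(v)=0$ exchange is essentially the whole of the paper's own proof (the paper forms $(D\setminus\{v\})\cup\{u\}$ with $u\in N^-(v)\cap D$, which is just $D\setminus\{v\}$, and uses ``fewest bad vertices'' where you use minimum total colour weight). The genuine gap is exactly the corner case you flag and defer: $\delta^+(v)=1$, $N^+(v)=\{w\}$, $c(w)=0$, and $v$ the only vertex of $D$ dominating $w$. Contrary to what you anticipate, no alternative local (or global) modification can rescue this case, because the conclusion ``$\delta^+(v)>1$'' can be unattainable there. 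Take $T=P_3$ with vertices $a$--$b$--$d$ and $c(a)=2$, $c(b)=1$, $c(d)=0$: since $c(d)=0$, the vertex $d$ cannot enter any up--color dominating set and its only possible dominator is $b$, while $a$ is a local maximum; hence the unique $\gamma_{uc}$--dominating $c$--set is $\{a,b\}$, in which $b$ satisfies $N^-(b)\cap D=\{a\}\neq\emptyset$ but $\delta^+(b)=1$. In this instance both of your fallback options fail, so the deferred case is not a patchable technicality: the statement as literally written admits no proof.

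What your correct cases do establish (and, notably, all that the paper's own one--case proof establishes as well) is the version with ``$\delta^+(v)>0$'': if $v\in D$ is dominated inside $D$ and $\delta^+(v)=0$, then $D\setminus\{v\}$ is a strictly smaller up--color dominating $c$--set, contradicting minimality --- no secondary weight criterion and no tree structure are even needed. The ``$>1$'' is presumably meant for the residual out--degree inside the pruning procedure of Theorem~\ref{th:tree_lineal1}, where edges to already--assigned vertices have been deleted, not for the out--degree in $(T,c)$ itself. So read your write--up as a sound proof of the ``$\delta^+(v)\geq 1$'' variant, in substance the same exchange argument as the paper's, while the $c(w)=0$ case you left open is a genuine gap that reflects a defect of the stated lemma rather than of your strategy.
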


\begin{proof}
    Let $D$ a $\gamma_{uc}$--dominating $c$--set such that there exists vertex $v\in D$ with $N^-(v)\cap D\neq \emptyset$ and $\delta^+(v)=0$. We suppose that $D$ has the minimum number of this kind of vertex among all the $\gamma_{uc}$--dominating $c$--sets. 
    Let $v\in D$ with $N^-(v)\cap D\neq \emptyset$ and $\delta^+(v)=0$ and $u\in N^-(v)\cap D$. Now the set $D'=(D\setminus\{v\})\cup \{u\}$ is also a $\gamma_{uc}$--dominating $c$--set, but with less number of vertices holding the required condition, contradicting the election of $D$.

\end{proof} 

\begin{theorem} \label{th:tree_lineal1}
	
	Given a pair $(T,c) $ where $T$ is a tree and $c$ is a coloring, it is possible to find a {\em  $\gamma_{uc}$--dominating $c$--set} $D$ in linear time.
	
\end{theorem}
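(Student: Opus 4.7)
The plan is to root $T$ at an arbitrary vertex $r$ and solve the problem by a bottom-up dynamic program whose state at each vertex records whether that vertex is in the candidate set and, if not, whether it is already dominated from within its own subtree. Writing $T_v$ for the subtree of $T$ rooted at $v$, I would keep three quantities at every vertex $v$: $A(v)$, the minimum size of an up--color dominating set of $T_v$ that \emph{contains} $v$ (feasible only when $c(v)>0$); $B(v)$, the analogous minimum under the additional constraint that $v\notin D$ but $v$ is dominated by some child $u$ of $v$ with $c(u)>c(v)$ that lies in the chosen set; and $C(v)$, the analogous minimum with $v\notin D$ and $v$ undominated from inside $T_v$, the responsibility of dominating $v$ being deferred to its parent.

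For a leaf $v$ the base case is direct: $A(v)=1$ if $c(v)>0$ and $+\infty$ otherwise, $B(v)=+\infty$, $C(v)=0$. For an internal vertex $v$ with children $u_1,\dots,u_k$ the recursion is obtained by deciding, for each child $u_i$ independently, the cheapest state compatible with $v$'s own state. The key compatibility rules, which encode the up--color direction, are: a higher--colored child $u_i$ (with $c(u_i)>c(v)$) may be in state $A$ or $B$ but never in state $C$, because $v$ cannot dominate a neighbor of strictly larger color; a lower--colored child may be in state $A$ or $B$ in all cases, and in state $C$ only when $v$ itself is in state $A$. Thus $A(v)$ adds $1$ to the sum over children of the cheapest compatible state; $C(v)$ adds $0$ and, for each child, uses $\min(A(u_i),B(u_i))$; $B(v)$ equals $C(v)$ with the extra constraint that at least one higher--colored child is in state $A$, implemented by the standard ``cheapest forced switch'' trick. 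At the root $r$, state $C$ is infeasible and the answer is $\gamma_{uc}(T,c)=\min(A(r),B(r))$.

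The work at each vertex is proportional to its degree, so the total cost of the DP is $O(\sum_v \deg_T(v))=O(n)$, and a concrete $\gamma_{uc}$--dominating $c$--set is recovered in a further $O(n)$ top-down traceback that records which state was chosen at each vertex. The main obstacle I expect is to set up the parent--child compatibility rules precisely, because domination is asymmetric in color; Lemmas~\ref{lem:treedom1} and~\ref{lem:treedom2}, together with Remark~\ref{remark:localmax}, serve as structural sanity checks, showing for instance that local maxima are forced into $D$, that non-local-max leaves need not be chosen, and that whenever a child of $v$ of lower color lies in $D$ together with $v$ it is never disadvantageous to trace back to the configuration described by the DP.
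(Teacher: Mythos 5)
Your dynamic program is correct, but it is a genuinely different argument from the one in the paper. The paper proves Theorem~\ref{th:tree_lineal1} by a greedy peeling procedure that works directly on the color-oriented digraph: Remark~\ref{remark:localmax} forces all local maxima into $D$, Lemma~\ref{lem:treedom1} lets leaves be discarded in favour of their parents, Lemma~\ref{lem:treedom2} lets already-dominated vertices with small out-degree be discarded, and the heart of the proof is a termination/progress argument showing that after pruning one can always find another vertex to which a role can be assigned. You instead root $T$ arbitrarily and run the standard three-state subtree DP (in the set / dominated from below by a higher-colored child / deferred to the parent), with the color asymmetry encoded in the parent--child compatibility rules and the color-$0$ prohibition encoded as infeasibility of state $A$; correctness then follows from the usual optimal-substructure argument rather than from exchange lemmas, and the lemmas of the paper become mere sanity checks, as you note. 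Your compatibility rules are the right ones (a higher-colored child can never be in the deferred state; a lower-colored child can be deferred only when the parent is chosen), the forced-switch handling of $B(v)$ is standard, and the cost is $O\left(\sum_v \deg v\right)=O(n)$ plus a linear traceback, so the claim is established. What each route buys: the paper's greedy needs no rooting and no tables and is stated purely in terms of the oriented tree, but its minimality and progress arguments are the delicate part; your DP is more systematic, yields an explicit optimality proof, and—by replacing cardinalities with weights in the same recurrences—extends verbatim to the weighted problem, which would in fact give a linear-time algorithm where the paper's later Theorem~\ref{th:weighted_tree_polinomial} settles for $\mathcal{O}(n^2\log n)$ time and $\mathcal{O}(n^2)$ memory. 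The only small points to make explicit in a final write-up are that $A(v)=+\infty$ whenever $c(v)=0$ (not just at leaves, though your state definition already says this) and that $B(v)=+\infty$ when $v$ has no higher-colored child.
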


\begin{proof}
To achieve this, an algorithm is described that finds the up–color dominating set in linear time. We define 
$D$ as the set of vertices to which we will assign the role of dominant, and $S$ as the set of vertices that will take on the non--dominant or submissive role. The algorithm will add vertices to these sets until all vertices in $T$ have been assigned a role. The steps of the algorithm are as follows:

In a first step we compute the in--degrees and the out--degree of each vertex (remember that we can consider the edges of the graph as oriented from the vertex of higher color to the vertex of lower color). Now, 
\begin{itemize}
    \item[(a)] Following Remark \ref{remark:localmax} we label all local maxima as in $D$.
    \item[(b)] Following Lemma \ref{lem:treedom1}, we assign all the leaves that are not to $D$ as in $S$, and their parents as in $D$.
    \item[(c)] Following Lemma \ref{lem:treedom2}, we include in $S$ to any vertex $v$ such that $N^{-}(v)\cap D\neq \emptyset$ \textbf{and} $\delta^{+}(v)\leq 1$.
\end{itemize}

At this point, we must pointed out that every time we add a vertex $v$ to $D$, we prune from $T$ all its incoming edges, and every time we consider a vertex $u$ as an element of $S$, we prune from $T$ all its incident edges. Of course, updating the degrees after those deletion costs a linear compensated time (the number of those updates depends only on the sum of the degrees of the vertices, that is $2n-2$ if $n$ is the number of vertices in $T$). Thus, in order to guarantee a total linear time, we next check that each time we can find at least a vertex to assign a role. 

Let's see that if there are vertices with no assignation after some steps, then we can find always one vertex satisfying (b) or (c). 
According to the description of the algorithm, the assignment of a vertex $v$ as dominant entails the deletion of its incoming edges. Consequently, the updated $T$  contains a subtree rooted at $v$. This subtree must have at least one leaf $u$ other than $v$. If $u$ is a sink then Case (b) includes $u$ in $S$. If $u$ is a source then Case (a) assigns him to $D$ but, moreover, if $u$  has an out--neighbor $w$ with out--degree  $1$ so it is assigned to $S$ in (c) and deleted from $T$. Therefore, the only situation that does not involve vertex and/or edges deletion in $T$ is the presence of a dominant vertex that each out--neighbor has at least out--degree $2$. If we mark one of that out--neigbours as the right vertex and consider the subtree rooted in it, we can repeat the reasoning on the new subtree which will contain a new subtree of the subtree and so on. Since we cannot repeat this situation indefinitely because the tree is finite, then at some point we must find a new dominant vertex with an out--neighbor that will be assigned submissive by (c) and will be deleted in $T$.

\end{proof}

Observe that algorithm described in Theorem \ref{th:tree_lineal1} does not work if we apply it to graph in general.

\begin{figure}[ht] 
	\centering
\includegraphics[width=0.3\textwidth]{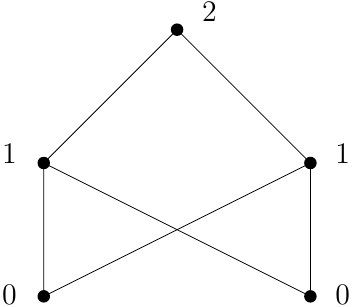}
	\caption{In this graph, algorithm described in Theorem \ref{th:tree_lineal1} labels as {\em dominant} the vertex with color $2$, but it does not label any other vertex.}
 \label{fig:alggraf}
\end{figure}

\section{Up--color domination weight} 
\label{sec:cwd}

In this section, the problem of minimizing the sum of the vertex weights in the context of up--color domination in graphs is tackled. We recall that $\omega_{uc}(G,c)$ denotes the up--color domination $c$--weight, that is,  the minimum weight of an up--color dominating $c$--set  for the pair $(G,c)$. Thus, the up--color domination weight of $G$, represented by $\Omega_{uc}(G)$, is the minimum $\omega_{uc}(G,c)$ among all possible colorings $c$, and those colorings that reach this minimum are called $\Omega_{uc}$--domination colorings.

This section begins by delving into the foundational properties of the up-color domination weight, with bipartite graphs serving as illustrative examples to clarify these concepts. Theorem~\ref{th:Omegachigamma} establishes upper and lower bounds for $\Omega_{uc}$ in connected graphs, leveraging well-known parameters such as the chromatic number, domination number, and independent domination number. Proposition~\ref{prop:hairy} focuses on the specific case of hairy graphs, analyzing the number of colors required for $\Omega_{uc}$--domination colorings and its implications for structural properties of these graphs. The section proceeds to prove the NP--completeness of two central problems: the \textsc{up--color domination $c$--weight}, via reductions from the \textsc{minimum cover} problem, and the \textsc{up--color domination weight}, through reductions from \textsc{balanced maximum E2–-satisfiability}. Additionally, a recursive algorithm is introduced to solve the \textsc{up--color domination $c$--weight} problem for trees in polynomial time, using their acyclic structure to achieve efficiency. The section concludes with bounds for $\Omega_{uc}$ specific to bipartite graphs, exploring the interplay between graph structure, colorings, and domination weight.

A first natural question is whether all $\Omega_{uc}$--domination colorings use the same number of colors. The bipartite graph $K_{3,3}$ provides a simple example of the negative answer to this question: it is straightforward to see that $\Omega_{uc}(K_{3,3})=3$, however, two $\Omega_{uc}$--domination colorings of $K_{3,3}$ that use two and three colors, respectively, are depicted in Figure~\ref{fig:k33_1}.

\begin{figure}[ht] 
	\centering
\includegraphics[width=0.5\textwidth]{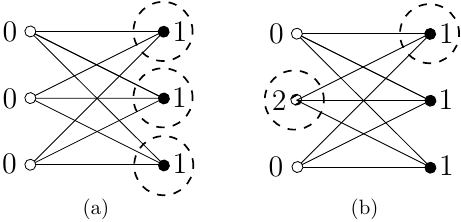}
	\caption{Two $\Omega_{uc}$--domination colorings for $K_{3,3}$ using different number of colors. The $\Omega_{uc}$--dominating sets are marked by circles with a dashed line.}
 \label{fig:k33_1}
\end{figure}

This example of complete bipartite graphs leads us to relate the up--color domination weight problem with the Grundy number of an undirected graph, that is, the maximum number of colors that can be used by a greedy coloring strategy that considers the vertices of the graph in sequence and assigns each vertex its first available color, using a vertex ordering chosen to use as many colors as possible.

Thus, the Grundy number of the complete bipartite graphs is two and all the colorings obtained by the greedy coloring algorithm are like the one shown in Figure~\ref{fig:k33_1}(a). Nevertheless, $\Omega_{uc}(K_{r,s})=3$ for $r,s \geq 4$ (see Figure~\ref{fig:k33_1}(b)), then no $\Omega_{uc}$--domination coloring can be obtained using the greedy coloring algorithm.

Regarding the relationship with Roman domination; a Roman dominating function on a graph $G=(V,E)$ is a function $f:V \rightarrow \{0,1,2\}$ satisfying the condition that every vertex $u$ for which $f(u) = 0$ is adjacent to at least one vertex $v$ for which $f(v) = 2$. The minimum weight of a Roman dominating function on a graph $G$ is called the Roman domination number of $G$, $\gamma_r(G)$ \cite{COCKAYNE200411}. Observe that $\Omega_{uc}(K_n)=n-1$ but $\gamma_r(K_n)=2$. On the other hand, $\Omega_{uc}(K_{1,n})=1$ but $\gamma_r(K_{1,n})=2$.

The following result provides bounds for the up--color domination weight based on known graph parameters.

\begin{theorem} \label{th:Omegachigamma}
Let $G$ be a connected graph of $n \geq 5$ vertices. Then, $$\max \{\chi(G)-1, \gamma (G) \} \leq \Omega_{uc}(G)\leq \min \{\frac{3}{2}(\chi (G)-1)\gamma(G), i(G)\chi(G)\} \leq \frac{1}{4}n^2.$$ Moreover, $\chi(G)-1= \Omega_{uc}(G)$ if and only if $G$ is a cone graph, that is, the closed neighborhood of one vertex is the whole graph.
\end{theorem}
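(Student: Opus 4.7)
The plan is to prove each link of the inequality chain and the equality case in turn. For the lower bound, I note that every up--color dominating $c$--set $D$ is in particular an ordinary dominating set, and since $c(v)\geq 1$ on $D$ its weight is at least $|D|\geq\gamma(G)$; I also observe that the non--empty top--colour class of any proper colouring $c$ must be contained in $D$, contributing weight at least $\chi(G)-1$. Minimising over $c$ delivers $\Omega_{uc}(G)\geq\max\{\chi(G)-1,\gamma(G)\}$.

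For the upper bound $i(G)\chi(G)$ I pick an independent dominating set $I$ with $|I|=i(G)$, colour $V\setminus I$ with a proper $\chi(G)$--colouring using the palette $\{0,1,\dots,\chi(G)-1\}$, and assign every vertex of $I$ the fresh colour $\chi(G)$: the coloring is proper (disjoint colour ranges), $I$ is up--color dominating (every outside vertex has a neighbour in $I$ of strictly larger colour), and the weight is $i(G)\chi(G)$. The bound $\tfrac{3}{2}(\chi(G)-1)\gamma(G)$ is the most delicate step: I fix a minimum dominating set $D$ and an optimal $\chi$--colouring $c_0$ with classes $V_0,\dots,V_{\chi-1}$, and relabel the classes so that $V_{\chi-1}$ is the one maximising $n_i:=|D\cap V_i|$, while the remaining labels $0,1,\dots,\chi-2$ are distributed among the other classes so that smaller labels correspond to larger values of $n_i$. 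With the colouring $c(v)=c_0(v)$ for $v\notin D$ and for $v\in D\cap V_{\chi-1}$, and $c(v)=c_0(v)+\chi$ for $v\in D\setminus V_{\chi-1}$, a short case check verifies that $c$ is proper and $D$ is up--color dominating. Writing $m_1\geq m_2\geq\dots\geq m_\chi$ for the sorted values of the $n_i$, the resulting weight simplifies to $\chi\gamma-m_1+\sum_{k=2}^{\chi}(k-2)m_k$; Chebyshev's sum inequality applied to the oppositely sorted sequences $(0,1,\dots,\chi-2)$ and $(m_2,\dots,m_\chi)$, together with the elementary estimate $m_1\geq\gamma/\chi$, then delivers the announced $\tfrac{3}{2}(\chi-1)\gamma$. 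The final estimate $\min\{\cdot\}\leq n^2/4$ follows from Ore's bound $\gamma(G)\leq n/2$ (applicable since $G$ is connected with $n\geq 5$), the Nordhaus--Gaddum--type inequality $\alpha(G)+\chi(G)\leq n+1$, and a short case split according to whether $\chi(G)\leq n/3+1$ (where $\tfrac{3}{2}(\chi-1)\gamma\leq\tfrac{3}{2}\cdot\tfrac{n}{3}\cdot\tfrac{n}{2}=n^2/4$ directly) or not (where $i\leq\alpha\leq n+1-\chi$ combined with classical inequalities handles the factor $i\chi$, the condition $n\geq 5$ absorbing residual low--order terms).

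For the equality case, the implication ``$G$ cone $\Rightarrow \chi(G)-1=\Omega_{uc}(G)$'' is immediate: if $v$ is universal, then $\chi(G-v)=\chi(G)-1$, so colouring $v$ with $\chi(G)-1$ and $G-v$ with an optimal $(\chi-1)$--colouring on $\{0,\dots,\chi-2\}$ makes $D=\{v\}$ up--color dominating of weight $\chi(G)-1$. Conversely, if $(c,D)$ realises $\Omega_{uc}(G)=\chi(G)-1$, then the top--colour class $c^{-1}(k)$, with $k\geq\chi(G)-1$, lies in $D$ and already contributes weight $k\cdot|c^{-1}(k)|\geq\chi(G)-1$; equality forces $k=\chi(G)-1$, $|c^{-1}(k)|=1$, and $D=\{v\}$ with $c(v)=\chi(G)-1$. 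For this singleton to be up--color dominating, $v$ must be adjacent to every other vertex, i.e., universal, so $G$ is a cone. I expect the Chebyshev--based weight estimate in the $\tfrac{3}{2}(\chi-1)\gamma$ bound, together with the careful choice of permutation of colour labels on which it depends, to be the main technical obstacle; the remaining items follow from direct constructions or classical graph--theoretic bounds.
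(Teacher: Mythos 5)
Your lower bound, the $i(G)\chi(G)$ construction, the shifted--colour construction for the $\frac{3}{2}(\chi(G)-1)\gamma(G)$ bound, and the cone characterisation are all correct and essentially coincide with the paper's proof (your Chebyshev bookkeeping is just a cleaner way to obtain the same column estimate the paper gets from the arithmetic progression of new colours). The genuine gap is the final link $\min\{\frac{3}{2}(\chi-1)\gamma,\ i\chi\}\leq \frac{1}{4}n^2$ in your second case $\chi>n/3+1$: from $i\leq\alpha\leq n+1-\chi$ you only get $i\chi\leq(n+1-\chi)\chi\leq\frac{(n+1)^2}{4}=\frac{n^2}{4}+\frac{2n+1}{4}$, and the excess $\frac{2n+1}{4}$ grows linearly in $n$, so it is not a ``residual low--order term'' that $n\geq5$ can absorb. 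The other branch does not rescue you in that regime either: for the corona of $K_m$ (attach one pendant to each vertex of $K_m$, $n=2m\geq 8$) one has $\chi=i=\gamma=n/2>n/3+1$, $\frac{3}{2}(\chi-1)\gamma=\frac{3n^2}{8}-\frac{3n}{4}>\frac{n^2}{4}$, and $i\chi=\frac{n^2}{4}$ exactly; so the inequality you must prove there is $i\chi\leq\frac{n^2}{4}$, it is tight, and it cannot follow from $\gamma\leq n/2$ together with $\alpha+\chi\leq n+1$ alone, since those constraints also admit the values $\chi=i=(n+1)/2$, for which $(n+1-\chi)\chi=\frac{(n+1)^2}{4}>\frac{n^2}{4}$.

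The missing ingredient is the sharper pair of inequalities the paper borrows from Gernert's argument: $i(G)\leq n-\Delta(G)$ (a maximal independent set through a vertex of maximum degree avoids its $\Delta$ neighbours), and $\chi(G)\leq\Delta(G)$ by Brooks' theorem whenever $G$ is neither complete nor an odd cycle, giving $i\chi\leq\Delta(n-\Delta)\leq\frac{n^2}{4}$ with no case split on $\chi$ at all; the two exceptional families are checked directly ($i\chi=n\leq\frac{n^2}{4}$ for $K_n$ with $n\geq5$, and $i\chi=3\lceil n/3\rceil\leq n+2\leq\frac{n^2}{4}$ for odd cycles with $n\geq5$). Replacing your second case by this argument closes the gap; everything else in your proposal stands.
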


\begin{proof}
By definition, the first inequality $\max \{\chi(G)-1, \gamma (G) \} \leq \Omega_{uc}(G)$ is obvious. Furthermore, if $\chi(G)-1= \Omega_{uc}(G)$, as all vertices colored with the greatest color ($\chi(G)-1$) must belong to the $\Omega_{uc}$--dominating set, then there is only one vertex of color $\chi(G)-1$, that dominates all the remaining vertices. And, given the cone graph, it is suffices to consider a $\chi(G)$--coloring that assigns the color $\chi(G)-1$ to the vertex adjacent to all the others.

About the second inequality, on the one hand, $\Omega_{uc}(G)\leq \frac{3}{2}(\chi (G)-1)\gamma(G)$ is obtained by constructing an up--color dominating $c$--set in the following way:

Let $D$ be a dominating set of $G$ of cardinal $\gamma(G)$ and $c$ an optimal coloring with chromatic number \( \chi(G) = s \). That coloring partitions the vertex set \( V \) into \( s \) color classes \( C_0, C_1, \dots, C_{s-1} \), such that \( V = \bigcup_{i=0}^{s-1} C_i \) and \( C_i \cap C_j = \emptyset \) for \( i \neq j \). Denote the intersection of \( D \) with each color class as \( D \cap C_i \). There exists an ordering \( (i_0, i_1, \dots, i_{n-1}) \) of the color classes such that:
\[
|D \cap C_{i_j}| \geq |D \cap C_{i_k}| \quad \text{for } j < k.
\]

The colors of the classes and the colors of the vertices in the dominating set $D$ are reassigned as follows:

1. The class \( C_{i_0} \) is assigned color \( s-1 \).

2. The remaining classes \( C_{i_1}, C_{i_2}, \dots, C_{i_{s-1}} \) are assigned colors \( 0, 1, \dots, (s-2) \) in any order.

3. For the elements of \( D \), the following additional assignments are made:

   - Elements of \( D \cap C_{i_1} \) are assigned color \( s \).
   
   - Elements of \( D \cap C_{i_2} \) are assigned color \( s+1 \).
   
   - In general, elements of \( D \cap C_{i_k} \) are assigned color \( s+k-1 \) for \( k = 1, \dots, s-1 \).

Let $c'$ be the resulting new coloring of $G$.
To organize the new colors of the elements in 
$D$, we define a matrix $M$ as follows:
Define a matrix \( M \) of size \( s \times m \), where \( m = |D \cap C_{i_0}| \). Each entry \( m_{k,j} \) of the matrix is defined as:
\[
m_{k,j} =
\begin{cases}
\text{the color of the } j\text{-th element of } D \cap C_{i_k}, & \text{if it exists;} \\
0, & \text{otherwise.}
\end{cases}
\]

The sum of the elements in the \( j \)-th column of \( M \), (it is the sum of the elements of an arithmetic progression), is denoted as:
\[
w_j = \sum_{k=0}^{s-1} m_{k,j}= (s-1)+s+(s+1)+\dots.
\]
If each column contains \( n_j \) non-zero elements, we have:
\[
w_j \leq \frac{(s-1) + (2s-2)}{2} \cdot n_j.
\]

Summing over all columns \( j \), we obtain the weight of \( D \) under the new coloring:
\[
\omega_{uc}(G,c') \leq \sum_{j=1}^m w_j.
\]
Since \( \sum_{j=1}^m n_j = |D|=\gamma(G) \), the total weight is bounded by:
\[
\sum_{j=1}^m w_j \leq \frac{(s-1) + (2s-2)}{2} \cdot \gamma(G).
\]
Simplifying further:
\[
\Omega_{uc}(G)\leq\omega_{uc}(G,c')\leq\sum_{j=1}^m w_j \leq \frac{3}{2}(s-1) \cdot \gamma(G).
\]

On the other hand, the inequality $\Omega_{uc}(G)\leq i(G)\chi(G)\}$ is obtained in a similar way, but being $D$ a minimal independent dominating set of $G$, and coloring its vertices with the color $\chi(G)$. 

For the last part we prove that $i(G)\chi(G) \leq \frac{1}{4}n^2$ using a similar argument to that employed by Gernert \cite{GERNERT1989151} in proving that $\gamma(G)\chi(G) \leq \frac{1}{4}n^2$. If we denote by $\Delta$ to the maximal degree of $G$, it is clear that $i(G)\leq n-\Delta$ and, if $G$ is neither an odd cycle nor the complete graph, then $\chi(G)\leq \Delta$, and multiplying both expressions, we obtain: $i(G)\chi(G)\leq \Delta (n.-\Delta)$. The maximum of the right hand side of the inequality is for $\Delta =n/2$, and then $i(G)\chi(G) \leq \frac{1}{4}n^2$.
\end{proof}

\begin{remark}
    Observe that the inequality $i(G)\chi(G) \leq \frac{1}{4}n^2$ given in Theorem \ref{th:Omegachigamma} is a slight improve of \cite[Theorem 2]{GERNERT1989151}.
\end{remark}

Observe that by a greedy scheme to obtain the equality $\Omega_{uc}(G)=i(G)\chi(G)$, each element in every minimal independent dominant set $D$ must have in its neighborhood a vertex of each of the $\chi$ colors.

A hairy graph of $G$ is the resulting graph when a leaf is added as a new neighbor of each vertex of G. If $l$ leaf vertices are added to each vertex, we say that is a $l$-hairy graph.

\begin{proposition}
\label{prop:hairy}
Let $G$ be a graph with chromatic number $k=\chi(G)$ and $H$ its $(k+1)$--hairy graph. Then, any $\Omega_{uc}$--domination coloring of $H$ uses at least $k+1$ colors.
\end{proposition}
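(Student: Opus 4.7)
The plan is to prove that any proper $k$-coloring $c$ of $H$ with $k=\chi(G)\geq 2$ is strictly beaten by some proper $(k+1)$-coloring in up-color domination weight. Since $\chi(H)=\chi(G)=k$, no coloring of $H$ uses fewer than $k$ colors, so this rules out $k$ as the number of colors in any $\Omega_{uc}$-domination coloring. (The case $k=1$ is immediate, since then $\chi(H)=2=k+1$.) The key structural fact I will exploit is that the $k+1$ leaves attached to each $v\in V(G)$ have $v$ as their unique neighbor, so the cost of each leaf-cluster in any up-color dominating set can be bounded independently.

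I would first reduce to the case where $c$ uses the palette $\{0,1,\dots,k-1\}$. If instead $c$ uses $\{a_1<\cdots<a_k\}$ with $a_1\geq 1$, then for any $v$ with $c(v)=a_1$ the $k+1$ leaves all have color $>a_1$ and are local maxima already forced into $D$; so some minimum-weight $D$ omits every $a_1$-vertex, and after the order-preserving relabeling $a_i\mapsto i-1$ this same $D$ remains feasible while every weight decreases, giving $\omega_{uc}(H,\tilde c)\leq \omega_{uc}(H,c)$. So I may assume $c$ has palette $\{0,1,\dots,k-1\}$ and put $n_j=|c^{-1}(j)\cap V(G)|\geq 1$. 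For each $v\in V(G)$, the contribution of $v$ and its leaves to any up-color dominating $c$-set is at least $k+1$ when $c(v)=0$ (since $v\notin D$ and all $k+1$ leaves are forced into $D$ at cost $\geq 1$ each), and at least $j$ when $c(v)=j\geq 1$ (since either $v\in D$ contributing $j$, or all leaves lie in $D$ for a total of at least $k+1>j$). Summing yields $\omega_{uc}(H,c)\geq n_0(k+1)+\sum_{j=1}^{k-1}j\,n_j$.

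Now I would define $c^*$ on $H$ by relabeling the color-$0$ class of $c|_{V(G)}$ to the new color $k$ and coloring every leaf with $0$: this is a proper $(k+1)$-coloring in which $D^*=V(G)$ is up-color dominating (each leaf of color $0$ is dominated by its parent of color $\geq 1$, and $D^*$ contains no vertex of color $0$), so $\omega_{uc}(H,c^*)\leq k\,n_0+\sum_{j=1}^{k-1}j\,n_j$. Subtracting gives $\omega_{uc}(H,c)-\omega_{uc}(H,c^*)\geq n_0\geq 1>0$, so $c$ is strictly worse than $c^*$ and cannot be $\Omega_{uc}$-dominating. The main obstacle I anticipate is making the per-vertex decomposition of the lower bound airtight—checking that the local lower bounds for different vertices can be summed without double-counting and that the initial reduction to the palette $\{0,\dots,k-1\}$ really is weight-non-increasing—but both points follow cleanly from the facts that leaf-clusters of distinct $V(G)$-vertices are pairwise disjoint and that a leaf has no neighbor other than its parent.
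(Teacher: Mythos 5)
Your treatment of the canonical case (palette $\{0,1,\dots,k-1\}$) is correct, and it is essentially the paper's own argument executed globally instead of locally: the paper picks a single color-$0$ vertex $v$ of $G$, recolors its $k+1$ leaves to $0$ and $v$ to a new color $k$, and exchanges those leaves for $v$ in the dominating set, dropping the weight by at least $1$; you recolor all leaves and the whole color-$0$ class at once and compare against $D^*=V(G)$. Your per-cluster lower bound $\omega_{uc}(H,c)\geq (k+1)n_0+\sum_{j\geq 1}jn_j$ is sound (leaf-clusters of distinct $G$-vertices are disjoint, and a color-$0$ leaf just forces its parent into $D$, which only strengthens the bound), and the comparison with $c^*$ is airtight. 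The paper simply assumes the $k$ colors are $0,\dots,k-1$ and never discusses other palettes, so up to that point you match it.

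The gap is in your reduction to that canonical palette: the claim that ``some minimum-weight $D$ omits every $a_1$-vertex'' is false. Your justification only covers $G$-vertices of color $a_1$ (whose leaves are forced local maxima), but says nothing about \emph{leaves} of color $a_1$, and a minimum-weight up-color dominating $c$-set can be forced to contain such leaves. For example, take $G$ a triangle $uvw$ (so $k=3$, four leaves per vertex in $H$) and $c$ with palette $\{1,6,7\}$: $c(u)=7$, $c(v)=6$, $c(w)=1$, leaves of $u$ and of $v$ colored $1$, leaves of $w$ colored $6$. The forced vertices are $u$ and the leaves of $w$; for the cluster of $v$ it is strictly cheaper to put its four color-$1$ leaves into $D$ (weight $4$) than $v$ itself (weight $6$), so the unique minimum-weight set (weight $35$) contains $a_1$-colored leaves, and after the relabeling $a_i\mapsto i-1$ it is no longer feasible, since those leaves become color $0$. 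The inequality $\omega_{uc}(H,\tilde c)\leq\omega_{uc}(H,c)$ you want is still true, but it needs a different argument (e.g., whenever a color-$a_1$ leaf lies in $D$ its parent $p$ is not in $D$, so all $k+1$ leaves of $p$ lie in $D$; exchanging that whole leaf-cluster for $p$, whose relabeled color is at most $k-1$, never increases the relabeled weight). Simpler still, you can drop the reduction entirely and run your own argument on the ordered palette $a_1<\cdots<a_k$: the cluster of a $G$-vertex in the $a_1$-class contributes at least $k+1$ whether $a_1=0$ (the vertex is barred from $D$, its leaves are forced) or $a_1\geq 1$ (its leaves are local maxima of color at least $1$), the cluster of an $a_i$-class vertex contributes at least $i-1$, and comparing with the $(k+1)$-coloring that sends the $a_1$-class to color $k$, the $a_i$-class to $i-1$, and all leaves to $0$ gives a strict surplus of $n_1\geq 1$. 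With either repair your proof is complete.
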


\begin{proof}
Let $c$ be an $\Omega_{uc}$--domination coloring of $H$ and $D$ an associated $\Omega_{uc}$--dominating set. We assume (reductio ad absurdum) that $c$ uses only $k$ colors (from $0$ to $k-1$). There is at least one vertex $v$ in $G$ such that $c(v)=0$ (in other case, $G$ could be colored with less than $k$ colors), and therefore $v \notin D$. Consequently, all its $k+1$ leaf vertices must be in $D$, and their contribution to $\Omega_{uc}(H)$ is greater than or equal to $k+1$. 

Another coloring $c'$ is defined in such a way that assigns color $0$ to those leaf vertices and colors the vertex $v$ with the new color $k$. If these leaf vertices are removed from $D$ and the vertex $v$ is added in it, it is straightforward to see that $w_{uc}(H,c') < \Omega_{uc}(H)$, which is a contradiction.
\end{proof}

Two interesting weight minimization problems arise in this context of domination over colored graphs. On one hand, we pose the corresponding question to \textsc{up--color domination $c$--number} but relative to weight. The first problem is the following:

\noindent \textbf{\textsc{up--color domination $c$--weight}}  \\
{\sc instance:} A graph $G=(V,E)$, a coloring $c$ and a positive integer $k$. \\
{\sc question:} Is $\omega_{uc}(G,c) \leq k$?

And the proof of the NP--completeness of that problem is obtained by mimicking the proof of Theorem~\ref{th:npcd}.  Thus, we have:

\begin{theorem} 
\label{th:npcw}
\textsc{\emph{up--color domination $c$--weight}} is an NP--complete problem, even if the graph is bipartite and $c$ is a $3$--coloring, and even for optimal coloring. 
\end{theorem}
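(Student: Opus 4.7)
The plan is to mimic the reduction in the proof of Theorem~\ref{th:npcd}, but retargeted against a weight bound rather than a cardinality bound. Membership in NP is immediate: given a candidate set $D$, one checks in polynomial time the two conditions defining an up--color dominating $c$--set and then sums $c(v)$ over $v \in D$ to compare with $k$.

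For the hardness, I would reduce from \textsc{minimum cover}. Given an instance with collection $C = \{s_1,\dots,s_l\}$ of subsets of size at most three of a ground set $S$ and a bound $k$, I build exactly the same bipartite graph $G$ as in Theorem~\ref{th:npcd}: $l$ copies of each element of $S$, one vertex $c_j$ per subset $s_j$ adjacent to the copies of the elements of $s_j$, and an extra vertex $v_0$ adjacent to every $c_j$. The same coloring $c$ is used, i.e., color $1$ on every $S$--copy, $2$ on every $c_j$, and $3$ on $v_0$. The graph is bipartite with parts $\{v_0\}\cup(S\text{--copies})$ and $\{c_j\}$, and $c$ uses only the three colors $\{1,2,3\}$. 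The target weight is set to $k' = 2k + 3$.

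The forward direction is immediate: a cover $C'\subseteq C$ with $|C'|\leq k$ yields the up--color dominating set $D = \{v_0\}\cup\{c_j : s_j\in C'\}$ of weight $3+2|C'|\leq 2k+3$. The delicate direction is the converse, which needs a swapping argument. Suppose $D$ is an up--color dominating $c$--set of weight at most $2k+3$; since $v_0$ is a local maximum it is forced in $D$, and writing $\alpha = |D\cap\{c_j\}|$ and $\beta = |D\cap(S\text{--copies})|$ the weight equals $3+2\alpha+\beta$. Any element $e\in S$ not covered by $\{s_j : c_j\in D\}$ forces all $l$ of its copies into $D$, so with $U$ the set of uncovered elements we have $\beta \geq l|U|$. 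Extending the selected $c_j$'s by one subset containing each element of $U$ produces a cover of size at most $\alpha+|U|$, and provided $l\geq 2$ (which holds for the canonical choice $l=|C|$ on all nontrivial instances),
\[
\alpha + |U| \;\leq\; \tfrac{1}{2}\bigl(2\alpha + l|U|\bigr) \;\leq\; \tfrac{1}{2}(2\alpha+\beta) \;\leq\; k,
\]
so a cover of size $\leq k$ exists.

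For the strengthening to optimal colorings I would append a disjoint triangle colored with $\{1,2,3\}$, forcing $\chi(G)=3$; its color--$3$ vertex is the unique local maximum of the triangle, contributes a fixed weight of exactly $3$, and can be absorbed by taking $k'=2k+6$. The main obstacle, and the only real difference from Theorem~\ref{th:npcd}, is calibrating the gadget for weights: each $c_j$ now costs $2$ rather than $1$ while each $S$--copy costs only $1$, so one must verify that $l$ is large enough to prevent a weight--optimal dominator from ``cheating'' by loading up on the cheap $S$--copies instead of paying for the appropriate $c_j$'s; the displayed inequality is precisely what pins down why $l\geq 2$ suffices.
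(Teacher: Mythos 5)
Your proposal is correct and follows exactly the route the paper intends: the paper's own ``proof'' of Theorem~\ref{th:npcw} is just the remark that one mimics the \textsc{minimum cover} reduction of Theorem~\ref{th:npcd}, which is what you do, and your recalibration of the threshold to $2k+3$ together with the swapping inequality $\alpha+|U|\leq\tfrac{1}{2}(2\alpha+\beta)\leq k$ (using $l\geq 2$ and the $l$ copies of each element) supplies precisely the weight bookkeeping the paper leaves implicit. The only cosmetic caveat, shared with the paper's own statement, is that the dummy triangle used for the ``optimal coloring'' claim destroys bipartiteness, so those two strengthenings hold separately rather than simultaneously.
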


On the other hand, a more general problem is set out for up--color domination weight of graphs and we again establish its NP--completeness nature.

\noindent \textbf{\textsc{up--color domination weight}}  \\
{\sc instance:} A graph $G=(V,E)$ and a positive integer $k$. \\
{\sc question:} Is $\Omega_{uc}(G) \leq k$?

\begin{theorem}
\textsc{\emph{ up--color domination weight}} is an NP--complete problem.
\end{theorem}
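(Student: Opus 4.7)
The plan is to establish NP--completeness in the standard way: membership in NP, followed by a polynomial--time reduction from a known NP--hard problem.

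Membership in NP is straightforward. A certificate is a pair $(c,D)$ where $c$ is a proper coloring of $G$ (which can be taken to use at most $|V|$ distinct colors, since larger colors are never needed in an optimum) and $D\subseteq V$. The certificate has polynomial size, and verification runs in polynomial time: check that $c$ is proper, that every vertex outside $D$ has a neighbor in $D$ of strictly higher color, that $D$ avoids color $0$, and that $\sum_{v\in D} c(v)\leq k$.

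For NP--hardness, I would reduce from \textsc{balanced maximum E2--satisfiability}, the NP--hard problem in which every clause has exactly two literals and each variable occurs positively and negatively the same number of times. Given an instance $\phi$ with variables $u_1,\dots,u_r$, clauses $C_1,\dots,C_s$, and threshold $K$, the aim is to construct a graph $G_\phi$ and an integer $k$ such that $\Omega_{uc}(G_\phi)\leq k$ if and only if some assignment of $\phi$ satisfies at least $K$ clauses. For each variable $u_i$ a \emph{variable gadget} is built around two adjacent vertices $x_i,\bar x_i$ (the positive and negative literals), decorated with auxiliary vertices so that in every coloring of minimum weight exactly one of $\{x_i,\bar x_i\}$ carries a prescribed high color, thereby encoding a truth value for $u_i$. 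For each clause $C_j=(\ell_1\vee\ell_2)$ a \emph{clause gadget} is attached to the two literal vertices in the spirit of the construction of Theorem~\ref{th:npcd}, tuned so that the gadget contributes weight $w$ when $C_j$ is satisfied and weight $w+1$ when it is not. The threshold is then $k=T+sw+(s-K)$, where $T$ is the fixed total weight contributed by the variable gadgets.

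The main obstacle is the variable gadget. Unlike in Theorem~\ref{th:npcd}, where the coloring is fixed by the instance, here the adversary freely picks any proper coloring of $G_\phi$ to minimize the weight, so the reduction must rule out colorings that do not encode consistent truth assignments---for example, those in which both $x_i$ and $\bar x_i$ receive low colors, or in which the same variable is \emph{truth--valued} differently by the clause gadgets in which it appears. The standard device is to attach to each literal vertex a bundle of pendants (or a small clique of carefully chosen size) whose cheapest domination forces one of $x_i,\bar x_i$ to a high color; the balance hypothesis on $\phi$ is then used to show that inconsistent flipping of a variable's encoding in order to ``satisfy'' an additional clause must ``unsatisfy'' the same number of clauses in which the opposite literal appears, so no global saving is possible. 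Once the gadgets are in place, a counting argument exploiting the strict monotonicity of each clause gadget's contribution yields the claimed equivalence: $\Omega_{uc}(G_\phi)\leq k$ if and only if $\phi$ admits an assignment satisfying at least $K$ clauses.
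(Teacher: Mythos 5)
Your NP--membership argument and your choice of source problem coincide with the paper: it also reduces from \textsc{balanced maximum E2--satisfiability}, builds one gadget per variable whose cheapest colorings encode a truth value, links the two literal gadgets of each clause, and sets a threshold of the form (fixed baseline) plus a penalty proportional to the number of unsatisfied clauses. However, your write-up stops exactly where the actual difficulty lies, and you say so yourself: the variable gadget is never constructed, and the soundness direction is only asserted. ``Attach a bundle of pendants or a small clique of carefully chosen size'' is not a proof, and as stated it would fail quantitatively. Since the adversary chooses the coloring of the whole graph, a coloring that breaks the truth encoding of even a single variable must be penalized by more than the total slack in the threshold, which is $2(s-l)=\Theta(s)$; a constant-size decoration per literal can only exact a constant penalty, so cheap ``cheating'' colorings are not excluded. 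This is why the paper's variable gadget is a hairy $K_{3s,3s}$ (size $\Theta(s)$, with a pendant on every vertex): if some $K_{3s,3s}$ has no vertex of color $0$, one of its sides is forced to colors $\geq 2$, costing at least $3s$ extra and immediately exceeding the threshold. Your proposal contains no mechanism with this property.

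The second missing piece is the global accounting needed once the gadgets are fixed. The paper's soundness proof does not argue clause-by-clause about ``flips''; it defines a $0/1$ labeling $\varphi$ from the color-$0$ side of each $K_{3s,3s}$, uses the balance hypothesis to pin $\varphi(V(G))$ at exactly half the vertices ($6rs+3s$, the baseline weight), partitions $V(G)$ into disjoint clause regions $A(c_i)$ plus a remainder, and proves $\varphi(A)\leq w(A\cap D)$ on each piece, with a surplus of $2$ on every region whose two $\alpha$--vertices are labeled $0$; counting these surpluses against the threshold bounds the number of unsatisfied clauses by $s-l$. Your intended use of balance (``flipping a variable unsatisfies as many clauses as it satisfies'') is a different and unsubstantiated claim, and it does not address the real issue, because an arbitrary optimal coloring need not be a flip of an assignment at all --- it can use many colors and encode nothing consistent. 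So the proposal is a reasonable plan that matches the paper's architecture, but the two steps that constitute the proof --- a gadget whose violation penalty grows with $s$, and a weight-versus-labeling counting argument valid for every coloring and every dominating set --- are absent.
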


\begin{proof}
As in Theorem~\ref{th:npcd}, it is straightforward to check that \textsc{up--color domination weight} is in NP. Next, we give a reduction from \textsc{balanced maximum E2--satisfiability} (or shortly \textsc{balanced max--E2--sat}) which is known to be NP--complete~\cite{2018-ppw-saabm2s}. 

Let $I_{mE2s}$ be an arbitrary instance of \textsc{balanced max--E2--sat}, which is made up of a set $U=\{ u_1,u_2,\ldots, u_r\}$ of $r$ Boolean variables, a collection $C=\{c_1, c_2, \ldots, c_s\}$ of $s$ clauses on $U$ such that each clause contains exactly two literals and each variable occurs affirmatively and negatively equally often, and a positive integer $0< l\leq |C|$. Then, the goal of \textsc{balanced max--E2--sat} is to find a truth assignment over $U$ that simultaneously satisfies at least $l$ clauses of $C$.

We define an instance $I_{uw}$ of \textsc{up--color domination weight} given by a graph $G$ of $12  r  s +6 s$ vertices and the positive integer $k = 6  r s + 3 s + 2  (s-l)$  as follows.

The graph $G$ consists of different gadgets that represent the variables and the clauses of $I_{mE2s}$. On the one hand, each variable $u_i$ gives rise to a hairy bipartite graph that is denoted by $HB_{u_i}$ (see Figure~\ref{fig:npOmega}(a)). As we mentioned above, it is made up of the complete bipartite graph $K_{3s, 3 s}$ along with $6 s$ hairs, that is, an incident edge for each of its $6 s$ vertices. Thus, $HB_{u_i}$ contributes with $12 s$ vertices to $G$.

\begin{figure}[ht] 
	\centering
\includegraphics[width=0.8\textwidth]{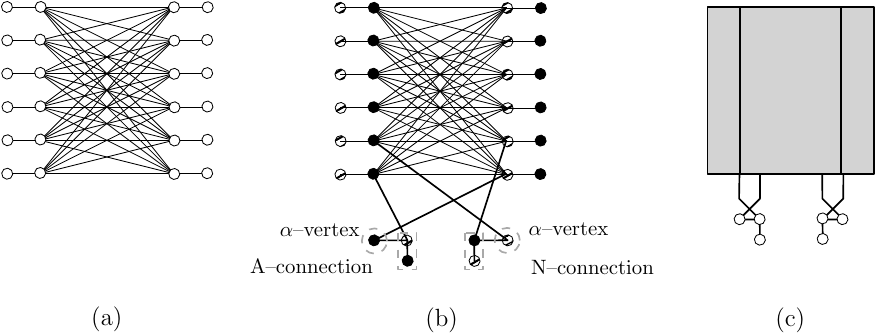}
	\caption{(a) The hairy bipartite graph $HB_{u_i}$ associated to $u_i$. (b) Two assemblies for $HB_{u_i}$. The affirmed and negated variable are represented by solid and striped vertices, respectively. The $\alpha$--vertex is enclosed by a dashed circle and the hair of the subgraphs of three vertices is marked with a dashed rectangle. (c) Schematic representation of (b).}
 \label{fig:npOmega}
\end{figure}

On the other hand, the clauses determine assemblies between the hairy bipartite graphs, and since those contain exactly two literals, these are assembled in pairs. Every literal is represented by a subgraph of three vertices as Figure~\ref{fig:npOmega}(b) shows, in which there are a distinguished vertex called $\alpha$--vertex and a hair or a pair of adjacent vertices (marked with a dashed circle and rectangle in Figure~\ref{fig:npOmega}(b), respectively). 
This subgraph is merged into the hairy bipartite graph following an A--connection or an N--connection depending on whether the clause contains the affirmed or negated variable, respectively. These mergers between subgraphs of three vertices and hairy bipartite graphs need as vertices of $HB_{u_i}$ as twice the number of clauses that contain $u_i$. Finally, the pair of hairy bipartite graphs is assembled by the adjacency of the two associated $\alpha$--vertices. Note that for every hairy bipartite graph, there exist as many A--connections as B--connections since every variable appears affirmatively and negatively equally often. Therefore, the clauses contribute with $6 s$ vertices to $G$, since the total number of subgraphs of $3$ vertices is $2  s$. 

Then, every Boolean variable of $U$ is represented in $G$ by a bipartite subgraph of the total graph, that is made up of a hairy bipartite graph and an even number of subgraphs of three vertices. One of its two classes of vertices as bipartite subgraph is associated with the affirmed variable (shown in Figure~\ref{fig:npOmega}(b) with solid vertices), and the other corresponds to the negated variable (depicted in Figure~\ref{fig:npOmega}(b) by striped vertices). 

Figure~\ref{fig:npOmega_ex} depicts the graph $G$ for a specific instance $I_{mE2s}$ with $r=s=4$. The clause collection of $I_{mE2s}$ is $C=\{\bar{u_1}u_3, u_1\bar{u_2}, u_2u_4, \bar{u_3}\bar{u_4}\}$ and its clauses are represented in $G$ in blue, red, orange, and green colors, respectively.

\begin{figure}[ht] 
	\centering
\includegraphics[width=0.7\textwidth]{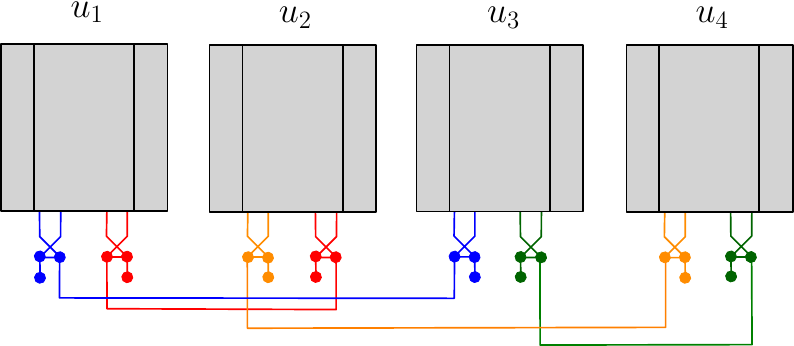}
	\caption{The graph $G$ obtained from an instance of  \textsc{balanced max--E2--sat} made up of four variables and the clause collection  $C=\{\bar{u_1}u_3, u_1\bar{u_2}, u_2u_4, \bar{u_3}\bar{u_4}\}$, whose clauses are represented in $G$ in blue, red, orange, and green colors, respectively.}
 \label{fig:npOmega_ex}
\end{figure}

It is clear that $I_{uw}$ is constructed in polynomial time in the size of $I_{mE2s}$, that is, $r$, $s$ and $l$.

Next, we prove that $I_{mE2s}$ admits a truth assignment $\Phi$ over $U$ that simultaneously satisfies at least the $l$ clauses of $C$ if and only if there is a coloring $c$ of $G$ such that it is possible to determine an up--color dominating $c$--set for the pair $(G,c)$ whose weight is smaller than or equal to $k$. Thus, we will conclude that \textsc{up--color domination weight} is an NP--complete problem. This equivalence of affirmative answers of both problems in general is supported by the link between true (resp. false) and color 1 (resp. color 0).

On the one hand, given such a truth assignment $\Phi$, we define a $3$--coloring $c$, starting with an auxiliary color assignment $c'$, as follows. If $\Phi(u_i)$=true, then $c'$ colors the vertices associated with the affirmed variable (resp. negated variable) with color 1 (resp. color 0), and vice versa if $\Phi(u_i)$ = false.  As a consequence, the $\alpha$--vertices are colored according to the value of $\Phi$ on the literals that symbolize, following the above rule (true and color 1, and false and color 0).  

\begin{figure}[ht] 
	\centering
\includegraphics[width=0.6\textwidth]{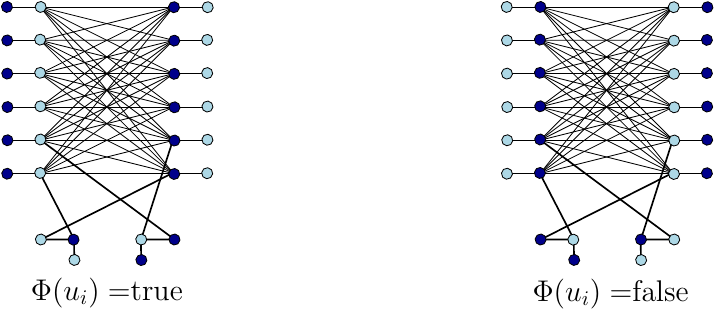}
	\caption{Color 1 or true literals (resp. color 0 or false literals) are represented by vertices in light blue (resp. dark blue).}
 \label{fig:npOmega_01}
\end{figure}

We highlight the fact that, up to now, $c'$ has assigned as many zeros as ones due to the balancing property of $I_{mE2s}$ (see Figure~\ref{fig:npOmega_01}). However, $c'$ is not a coloring yet, since the possible existence of clauses containing two literals with the same truth assignment leads to adjacent $\alpha$--vertices colored with the same color. In this case, the color of one of these $\alpha$--vertices changes to 2, and in this way we obtain the coloring $c$ of $G$. 

For the pair $(G,c)$, it is possible to obtain an up--color dominating $c$--set $D$ of weight at most $k = 6 r s +3 s + 2  (s-l)$ following the steps below. In a first stage, $D$ contains all vertices originally colored with 1 in $c'$, which entails a weight of $6  r  s +3 s$. In a second step, we focus our attention on the pairs of adjacent $\alpha$--vertices colored with the same color 1 in $c'$, which belong to $D$. As we have mentioned previously, one vertex of every such pair changes its color to 2 and so remains in $D$, and the other is removed from $D$. Thus, the weight of $D$ is not modified. Finally, the vertices colored 2 and coming from 0 in $c'$ are added to $D$. Note that there are at most $(s-l)$ such vertices since $\Phi$ simultaneously satisfies at least $l$ clauses. It is straightforward to verify that the vertex set $D$ is an up--color dominating $c$--set of weight at most $k = 2  (s-l)+ 6  r  s +3 s$.

On the other hand, let $c$ be a coloring of $G$ that provides an affirmative answer for $I_{uw}$, which means there exists an up--color dominating $c$--set $D$ of weight $w(D)$ at most $k$. Without loss of generality, we may assume that $c$ uses all colors from $0$ to a certain maximum color. Furthermore, it is obviously that each dominant vertex contributes with at least one unit to $w(D)$ and that, for each hair of $G$, at least one of its vertices must belong to $D$. By following the below steps, we define a truth assignment $\Phi$ over $U$ that simultaneously satisfies at least $l$ clauses of $C$. By language abuse, we indiscriminately refer to subgraphs of $G$ or to the components of $I_{mE2s}$ that represent:

\renewcommand{\theenumi}{\roman{enumi}}

\begin{enumerate}
\item First, we prove that there is at least one vertex in each $K_{3s, 3s}$ such that its color by $c$ is $0$.
\item Then, we assign a labeling to the vertices of $G$ following the property in the previous step. An important property of this labeling is that the number of vertices labeled with $1$ is exactly half of the vertices of $G$, that is, $6rs+3s$.  
\item Next, we compare the weight of $D$ with the sum of all labels in $V(G)$.
\item Finally, $\Phi$ is defined based on this labeling, and, by the bounds obtained in the previous step, we conclude that at least $l$ clauses are satisfied.
\end{enumerate}

\noindent {\sl Step i.} Since one of the two vertices of each hair of $G$ must belongs to $D$, it is clear that $w(D)$ is at least $6rs+2s$, that is, the number of hairs of $G$. Now, assuming on the contrary, let $HB_{u_i}$ be a bipartite subgraph such that no vertex of degree greater than $1$ (that is, vertex in $K_{3s, 3s}$) has color $0$ in $c$. Thus, all vertices of one of the classes of $K_{3s, 3s}$ as bipartite graph have color at least $1$, and then the color of the vertices in the other class must be at least $2$. Therefore, the domination or belonging to $D$ of these $3s$ vertices contributes with at least $3s$ units more to $w(D)$. In this way, the total weight of $D$ would be at least $6rs+2s+3s$, what implies the contradiction of $l=0$. So, there is at least one vertex in each $K_{3s, 3s}$ such that its color by $c$ is $0$.

\noindent {\sl Step ii.} Obviously, if one of the classes of $K_{3s, 3s}$ contains a vertex of color $0$, then all vertices in the other class have colors greater than $0$. Therefore, the class of each $K_{3s, 3s}$ containing some vertex of color $0$ is well defined. This vertex class of $K_{3s, 3s}$ is included in one of the classes of the bipartite graph that represents each Boolean variable (see Figure~\ref{fig:npOmega}(b)). Thus, the vertex labeling $\varphi$ on $G$ is defined by assigning the label $0$ to all vertices of this class (both $HB_{u_i}$ and subgraphs of three vertices) and the label $1$ to the other class (again, the whole class, both $HB_{u_i}$ and subgraphs of three vertices). 

Then, given $A \subseteq V(G)$, we define $\varphi(A) = \sum_{v \in A}\varphi(v)$. Note that half of the vertices of $G$ are labeled with $1$ and the other half with $0$ since $I_{mE2s}$ is a balanced instance. Hence, $\varphi(V(G))=6rs+3s$.

\noindent {\sl Step iii.} Through this step, $w(D)$ is compared with $\varphi(V(G))$ in the following way. First, we divide $V(G)$ into disjoint subsets and prove that in each one of these subsets $A$ it is verified that $\varphi(A) \leq w(A \cap D)$. Thus, for each clause $c_i$, with $1\leq i \leq s$, we define $A(c_i)$ as the vertex set of cardinal $14$ made up of the $6$ vertices in the gadget corresponding to $c_i$ (two subgraphs of three vertices) plus $8$ vertices in the two $HB_{u_i}$ to which $c_i$ is merged: the $4$ adjacent vertices to $c_i$ plus their adjacent vertices of degree 1. Figure~\ref{fig:npaci} shows vertex sets $A(c_i)$ for a clause including an affirmed variable (A--connection) and a negated variable (N--connection) (the remaining situations are similar), and the three possible cases according to the labeling given by $\varphi$ :

\renewcommand{\labelenumi}{(\alph{enumi})}
\begin{enumerate}
    \item $\varphi(\alpha_1) \not= \varphi(\alpha_2)$ and then $\varphi(A(c_i))=7$.  
    \item $\varphi(\alpha_1) = \varphi(\alpha_2)= 0$ and then $\varphi(A(c_i))=6$.
    \item $\varphi(\alpha_1) = \varphi(\alpha_2)= 1$ and then $\varphi(A(c_i))=8$.   
\end{enumerate}

\begin{figure}[ht] 
	\centering
\includegraphics[width=1\textwidth]{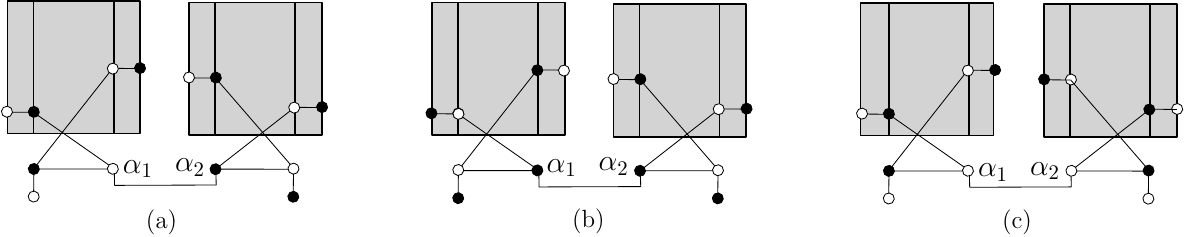}
	\caption{The value of $\varphi$ over white (resp. black) vertices is $1$ (resp. $0$). (a) $\varphi(\alpha_1) \not= \varphi(\alpha_2)$. (b) $\varphi(\alpha_1) = \varphi(\alpha_2)= 0$.  (c)  $\varphi(\alpha_1) = \varphi(\alpha_2)= 1$.} 
 \label{fig:npaci}
\end{figure}

Next, straightforward arguments are established to prove that $\varphi(A(c_i))\leq w(A(c_i) \cap D)$ for $1\leq i \leq s$. 
Since the induced subgraphs of $G$ by the vertex sets $A(c_i)$ have $6$ hairs, it holds that $w(A(c_i) \cap D)$ is greater or equal to $6$. Furthermore, at least one of the adjacent $\alpha$--vertices has to be colored with color greater than or equal to $1$, and therefore it is either dominant or the vertex that dominates it is colored with color at least $2$. This fact implies a minimum of one unit more of $w(A(c_i) \cap D)$. Thus, $\varphi(A(c_i))\leq w(A(c_i) \cap D)$ in the Cases (a) and (b).  

Since $\varphi(\alpha_1)=\varphi(\alpha_2)=1$, the point in the last Case (c) is that all the vertices of $K_{3,3}$ belonging to the same class than $\alpha_i$, for $i=1$, $2$, are colored with a color greater or equal to $1$. Thus, if one of the $\alpha$--vertices has color $0$, then, either its adjacent vertex in the corresponding subgraph of three vertices or the adjacent vertex to this one in $K_{3,3}$ is colored with at least color $2$. If no $\alpha$--vertex is colored with $0$, then one of them has a color greater or equal to $2$. In any case, $\varphi(A(c_i)) = 8 \leq w(A(c_i) \cap D)$.

In order to complete the division of $V(G)$ into disjoint subsets, the bipartite graph $G-\cup_{i=1}^s A(c_i)$ is considered. On the one hand, we point out that half of the vertices of this graph are labeled by $\varphi$ with 1 and the other half with $0$. On the other hand, we recall that each hair contributes with at least one unit to $w((G-\cup_{i=1}^s A(c_i)) \cap D)$ by means of one of its adjacent vertices. Thus, it is clear that $\varphi(G-\cup_{i=1}^s A(c_i)) \leq w((G-\cup_{i=1}^s A(c_i)) \cap D)$. Therefore, $\varphi(V(G)) = 6rs+3s \leq w(D)$.

Now, we bring to light one more property of the vertex sets $A(c_i)$ in Case (b) ($\varphi(\alpha_1)=\varphi(\alpha_2)=0$ and then $\varphi(A(c_i))=6$). As we have established previously, the $6$ hairs in the induced subgraph of $G$ by $A(c_i)$ contribute with at least $6$ units to $w(A(c_i) \cap D)$. Moreover, at least one of the adjacent $\alpha$--vertices has to be colored with a color greater than or equal to $1$, and therefore the domination or belonging to $D$ of its two adjacent vertices in the corresponding hairy bipartite graph demands $2$ units as minimum, since their colors cannot be zero (their labels by $\varphi$ are $1$). Then, it is verified that $\varphi(A(c_i)) +2 \leq w(A(c_i) \cap D)$ for each clause $c_i$ such that its $\alpha$--vertices are labeled with $0$ by  $\varphi$. 

Due to all of the above, if $n_0$ denotes the number of such clauses, we have on one hand that $\varphi(V(G)) +2n_0 = 6rs +3s +2n_0 \leq w(D)$, and on the other hand $w(D) \leq k= 6  r s + 3 s + 2  (s-l)$. Thus, $n_0 \leq s-l$.

\noindent {\sl Step iv.} Lastly, we are ready to define a truth assignment $\Phi$ for $I_{mE2s}$ in the following way: those literals represented by $\alpha$--vertices with label $1$ (resp. $0$) are said to be true (resp. false). It is straightforward to see that the number of non--satisfied clauses is exactly $n_0\leq s-l$, therefore, $\Phi$ simultaneously satisfies at least $l$ clauses of $C$.

Thus, the proof of the the NP--completeness of \textsc{up--color domination weight} problem is finished.
\end{proof}


Similarly to the result of Theorem~\ref{th:tree_lineal1} for the up--color domination $c$--number, in spite of having an exponential number of $\omega_{uc}$--dominating $c$--sets  (see Figure~\ref{fig:domsetexptree2}), the \textsc{up--color domination $c$--weight} problem becomes solvable in polynomial time on trees, as Theorem~\ref{th:weighted_tree_polinomial} states. This result is based on the following two facts: On the one hand, the optimal domination of the tree also determines the optimal domination for certain subtrees. On the other hand, we can exploit its acyclic structure to design an efficient polynomial--time algorithm. We will define a specific data structure and decision rules on how to optimally assign a dominant or submissive role to each vertex. In order to do this and by working in the directed context provided by the graph and its coloring (see the introduction of the paper), a depth--first search (DFS) from the source vertices will allow us to establish an optimal order for assigning roles to the vertices. 

\begin{figure}[ht] 
	\centering
\includegraphics[width=0.7\textwidth]{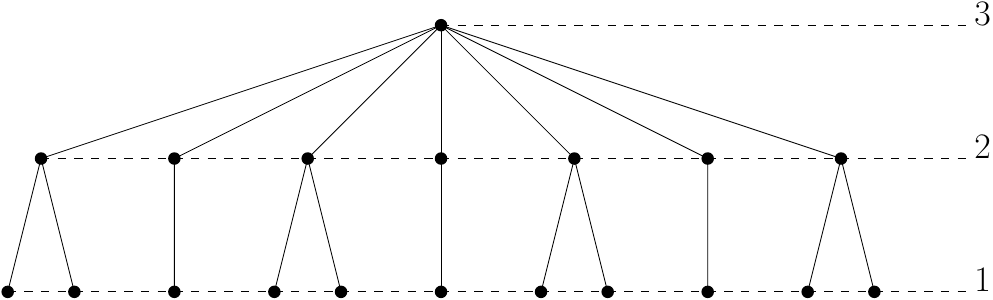}
	\caption{The number of $\omega_{uc}$--dominating $c$--sets is exponential as this example shows. The root of this tree, with color $3$, must be in any $\omega_{uc}$--dominating $c$--set, but then, it is possible to include either each vertex with the color $2$ and degree $2$ or its child. }
 \label{fig:domsetexptree2}
\end{figure}

We consider a tree $T$ and a coloring $c$ of $T$. First, given an $\omega_{uc}$--dominating $c$--set $D$, an edge $uv$ is chosen in $T$ such that $c(u) > c(v)$, and further assume that $u \in D$ while $v \notin D$. Then, whether $T_1, T_2, \ldots, T_k$ are the subtrees obtained from $T$ by removing $v$ and all the incoming edges to $u$, it is straightforward to see that $D \cap T_i$ is an $\omega_{uc}$--dominating $c$--set of the pair $(T_i, c|_{T_i})$ for $i=1, \ldots, k$.

Next, a recursive algorithm is developed to
solve the \textsc{up--color domination $c$--weight} problem for trees. Let $dT$ be the directed tree obtained by orienting the edges of $T$ starting from its vertex with the highest color. We denote by $F_c(dT)$, $S_c(dT)$, $C_c(dT)$ the set of source, sink, and confluence vertices of $dT$, respectively. Note that $S_c(dT) \cap C_c(dT)$ can be non--empty. Moreover, for each vertex $v$ of $dT$, we defined $dT_v$ as the directed subtree of $dT$ rooted in $v$. Since there are no cycles in $T$, it follows that $dT_v-\{v\}$ is a forest of disconnected subtrees.

This recursive procedure will visit each vertex $v$ to record and store values for dominating sets with minimum weight of $dT_v$ under the two possible assumptions: first, the vertex $v$ is dominant in the solution for $(T,c)$, and second, the said vertex is submissive in the solution for $(T,c)$. The procedure will start at each source vertex and, following the edges of $dT$, will visit the vertices until it ends at the sink vertices, whose associated values are known. In order to do this, we will define a data structure that stores the partial results obtained for each visited vertex. More specifically, for each vertex $v$ in the directed tree $dT$, the data structure $DS[v]$ is defined as $DS[v]= [[D_0(v), w_0(v)], [D_1(v),w_1(v)], op(v)]$, where $w_0(v)$ and $D_0(v)$ represent the minimum weight and a dominating set of $dT_v$, respectively, assuming that $v$ is dominant for $(T,c)$; and $w_1(v)$ and $D_1(v)$ represent the same elements, assuming that $v$ is submissive for $(T,c)$. The value $op(v)$ indicates the assumption with the lowest weight. Note that if $v$ is submissive for $(T,c)$, then each vertex $u \in N^+(v)\setminus C_c(dT)$ must necessarily be dominant, while the vertices $u \in N^+(v) \cap C_c(dT)$ can also be submissive (dominated by $N^-(u) \setminus \{v\}$). The final assignment of roles to each vertex of $T$ will require further verification and choice in the case of the vertices of $C_c(dT)$, which will be described later. With these ideas, we have the following recurrence relations:

\begin{enumerate}
	\item  $w_0(v)=c(v)+\sum\limits_{ u\in N^+(v)}^{} w_{op(v)}(u)$.
	\item   $D_0(v)=\{v\}\, \cup \bigcup\limits_{ u\in N^+(v)}^{} D_{op(u)} (u)$.
	\item  $w_1(v)=\sum\limits_{ u\in N^+(v)\cap  C_c(dT)} w_{op(v)}(u) + \sum\limits_{ u\in N^+(v) \setminus  C_c(dT)} w_0(u)$.
	\item $ D_1(v)=\bigcup\limits_{ u\in N^+(v)\cap  C_c(dT)} D_{op(v)}(u) \, \, \cup \bigcup\limits_{ u\in N^+(v) \setminus  C_c(dT)} D_0(u)$.
	\item $ op(v) = 0 \textup{\ if\ } w_0(v) \leq w_1(v) ; op(v) =1 \textup{\ otherwise} $.
\end{enumerate}

Note that if $v\in S_c(dT)$, then $ DS[v]=[ [\{v\},c(v) ], [\emptyset, 0], 1]$. On the other hand, if $v\in F_c(dT)$ then necessarily $v$ must be a dominant vertex, so we can avoid the computation of the second pair within $DS[v]$ and set $op(v)=0$.

The recurrence call is applied at most $n-1$ times (at least one sink vertex exists), and the memory read of $DS[v]$ is performed the same number of times. However, the memory cost is of the order $\mathcal{O}(n^2)$ to record all values $DS[v]$. In addition, numerous set unions need to be calculated even though they are disjoint sets (so with constant computational cost), as well as set intersections and set differences, so the computational cost of the calculation is of the order $\mathcal{O}(n\ log(n))$, and the memory cost is $\mathcal{O}(n^2)$.

Note that if $op(v)=1$ in $DS[v]$ (the optimum is obtained for $v$ as a submissive vertex), it is necessary for there to exist $u\in N^-(v)$ with $op(u)=0$ (the vertex $v$ is dominated by a vertex $u$) to be able to take the said optimum. For vertices $v\notin C_c(dT)$, there is only one option for domination (or none if it is a source vertex). In contrast, vertices $v\in C_c(dT)$ require a comparative analysis of which vertex $u\in N^-(v)$ would be the best candidate to dominate $v$, in the sense that the resulting total weight increase is minimal compared to taking vertex $v$ as dominant. Let us see how to do this comparative analysis.

Let $v\in C_c(dT)$ with $op(v)=1$ and $op(u)=1$ for all $u\in N^-(v)$. One possibility is to make $v$ dominant, which increases the total weight by $w_0(v)-w_1(v)$. Another possibility is to take one of the vertices $u$ incident to $v$ as dominant, with a consequent weight increase of $w_0(u)-w_1(u)$. The optimal choice is to take $$argmin(\{w_0(v)-w_1(v), w_0(u_1)-w_1(u_1),\dots, w_0(u_r)-w_1(u_r)\})$$ for all $u_i\in N^-(v)$, with $r=|N^-(v)|$.

Moreover, it is important to note that given $v, u \in C_c(dT)$, it can happen that $dT_v$ is a subgraph of $dT_u$, so the decision on the optimal choice in $v$ must be made after the decision in $u$. In such a case, we say that the optimal choice in $u$ precedes the optimal choice in $v$. 
To establish the precedence relationships among the confluence vertices, we proceed as follows: a depth--first search (DFS) is performed starting from each source vertex of the directed tree. A global counter records the visit order, which is not reset when a new DFS is initiated from another source vertex. Thus, the precedence relationship between the vertices in $C_c(dT)$ is based on the moment the confluence vertices are last visited. This approach ensures that the resulting order accurately captures the hierarchical structure and dependencies within the tree.

After all these considerations, an optimal solution to the weighted domination problem for $(T, c)$ is obtained for the dominating set formed by $D=\bigcup\limits_{v\in F_c(dT)} D_0(v)$, which must be modified and updated for each $v\in C_c(dT)$ that is neither dominant nor dominated, following the order given by the previous precedence relation. Thus, for example, if the vertex $v$ changes from submissive to dominant, then we update the set $D$ in two steps: $D=D\setminus D_1(v)$ and $ D=D\cup D_0(v)$. In this way, the computational cost to calculate $D$ is of the order $\mathcal{O}(|C_c(dT)|\, n\ log(n))= \mathcal{O}(n^2\ log(n))$.

Consequently, we have obtained the following theorem:

\begin{theorem} \label{th:weighted_tree_polinomial}
Given a pair $(T,c) $, where $T$ is a tree and $c$ is a coloring of $T$, it is possible to obtain an {\em  $\omega_{uc}$--dominating $c$--set of $T$}  with computational cost $\mathcal{O}(n^2\ log(n))$ and memory cost $\mathcal{O}(n^2)$.
	
\end{theorem}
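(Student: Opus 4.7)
\medskip\noindent\textbf{Proof plan.}
The plan is to work on the directed acyclic version $dT$ of $(T,c)$ and proceed by a bottom--up dynamic programming scheme along $dT$, storing for every vertex $v$ two partial optima on the subtree $dT_v$: one under the hypothesis that $v$ will play a dominant role in the final solution (pair $(D_0(v),w_0(v))$) and another under the hypothesis that $v$ will be submissive (pair $(D_1(v),w_1(v))$). This is the natural choice because once the role of the root of $dT_v$ is fixed, the problem decouples across the subtrees rooted at its out--neighbours, as observed in the paragraph immediately preceding the theorem (removing a chosen dominant vertex splits the problem into independent sub--instances). The base step is straightforward: at each sink $v \in S_c(dT)$ one sets $DS[v]=[[\{v\},c(v)],[\emptyset,0],1]$; at each source $v\in F_c(dT)$ one forces $op(v)=0$ since no in--neighbour exists to dominate it.

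Next I would verify the correctness of the recurrences (1)--(5) by induction on the height of $dT_v$. The key point is that if $v$ is marked dominant, then every out--neighbour $u$ is already covered by $v$ and may freely take its locally best option $op(u)$; conversely, if $v$ is submissive then every $u\in N^+(v)$ that is \emph{not} a confluence vertex has no alternative dominator inside $dT_v$ and must therefore be dominant, while a confluence $u\in N^+(v)\cap C_c(dT)$ may still be submissive because some other vertex of $N^-(u)$, lying outside $dT_v$, could dominate it. These are exactly the two cases split in formulas (3) and (4), and the optimality of $op(v)$ follows by taking the minimum in~(5) using the inductive hypothesis on the children.

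The main obstacle, and the most delicate part of the argument, is the global consistency step for confluence vertices. After the bottom--up pass, a vertex $v\in C_c(dT)$ with $op(v)=1$ may happen to have $op(u)=1$ for every $u\in N^-(v)$, leaving $v$ formally uncovered. I would argue that the globally optimal repair consists in either promoting $v$ itself (extra cost $w_0(v)-w_1(v)$) or promoting one of its in--neighbours $u$ (extra cost $w_0(u)-w_1(u)$), and that the right choice is the $\arg\min$ of these quantities, exactly as described in the algorithm. The subtlety is that the subtrees $dT_u$ and $dT_v$ may share confluence vertices deeper down, so the decisions are not independent; I would resolve this by processing the vertices of $C_c(dT)$ in the order given by the DFS last--visit counter defined in the excerpt, and prove by induction on that order that once a confluence vertex's repair has been committed the residual sub--instance is still structurally equivalent to the one analysed by the recurrences, so no double counting or uncovered vertex remains.

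Finally, I would collect the complexity bounds. The recurrences visit each vertex a constant number of times; the union and set--difference updates dominate, and since every $D_i(v)$ may have size $\Theta(n)$ the memory footprint is $\mathcal{O}(n^2)$, as claimed. Each confluence repair requires scanning $N^-(v)$ and performing set updates of size $\mathcal{O}(n)$ sorted or hashed in $\mathcal{O}(n\log n)$; aggregated over $|C_c(dT)|\le n$ confluence vertices this yields the announced $\mathcal{O}(n^2\log n)$ running time. Putting the correctness argument and these counts together gives the theorem.
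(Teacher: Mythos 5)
Your proposal follows essentially the same route as the paper: the same two--state dynamic program $DS[v]=[[D_0(v),w_0(v)],[D_1(v),w_1(v)],op(v)]$ on the rooted subtrees $dT_v$ with the same base cases, the same recurrences splitting $N^+(v)$ by confluence membership, the same $\arg\min$ repair for uncovered confluence vertices processed in the DFS last--visit precedence order, and the same $\mathcal{O}(n^2\log n)$ time / $\mathcal{O}(n^2)$ memory accounting. It is correct at the paper's own level of rigor, so no further comparison is needed.
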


\begin{figure}[ht]
    \centering
    \includegraphics[width=0.7\textwidth]{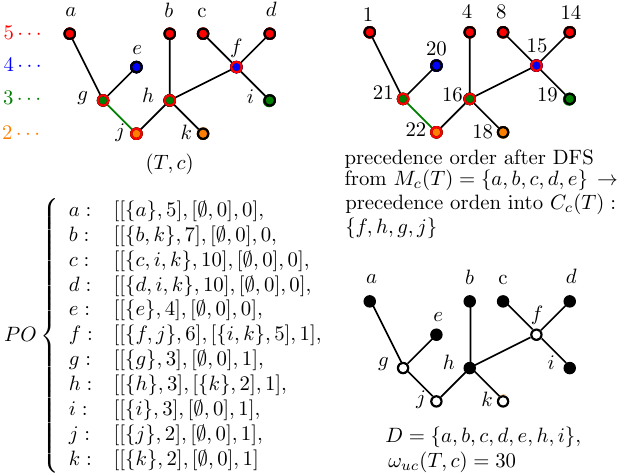}
    \caption{Illustrative example showing (top left) a tree graph and its coloring, (bottom left) the memory record values \( DS \), (top right) the precedence order resulting from the DFS algorithm, and (bottom right) the optimal dominating set, as described in Theorem \ref{th:weighted_tree_polinomial}}
    \label{fig:illustrative_example}
\end{figure}

Figure \ref{fig:illustrative_example} provides an illustrative example of the algorithm described in Theorem \ref{th:weighted_tree_polinomial}. It presents a tree graph \( (T, c) \) and its coloring in the top left section. In the bottom left section, the values of the memory record \( DS \) calculated for each vertex are shown, reflecting the domination decisions and their respective weights. The top right section details the precedence order resulting from applying the DFS algorithm starting from the source vertices, while the bottom right section represents the optimal dominating set obtained. This example allows us to visualize the process described in the theorem, illustrating how decisions are structured and optimized based on the calculated values and the determined hierarchical order.

	$c'(v)= \left\{ \begin{array}{lc}
             0 & \textrm{if }  v\in S_w, \\
             1 & \textrm{if } v\in S_b \textrm{ or } v\in D_{(b,i)}\ \  \forall i=1,\dots,k, \\
             2 & \textrm{if }v\in D_{(w,i)}
                          \end{array}
   \right.$

\section{Conclusions and open problems} 
\label{sec:cop}

The concept of up--color domination has been introduced in a natural way from the notions of coloration and domination in graphs.

The exploration of domination in graphs, as emphasized by Haynes et al.~\cite{2023-haynes-domination}, continues to inspire new directions and challenges in this field.

Following that purpose, in this work, we have introduced a natural concept of domination in the context of colored graphs (where each color assigns a weight to the vertices of its class), termed {\em up--color domination}, where a vertex dominating another must be heavier than the other. That idea allows to define two new parameters measuring the size or the weight of minimal dominating sets. For both concepts, finding an optimal set is an NP--hard problem for general graphs but polynomial for trees.

Addionally, some new definitions could be given and will be the subject of some future works. For instance, 
the minimum $\gamma_{uc}(G,c)$ among all possible colorings $c$ is called {\em the up--color domination number of $G$} and denoted by $\Gamma_{uc}(G)$. Every coloring $c$ verifying $\Gamma_{uc}(G)=\gamma_{uc}(G,c)$ is named {\em $\Gamma_{uc}$--domination coloring} and the associated $\gamma_{uc}$--dominating $c$--set is called  {\em $\Gamma_{uc}$--dominating set}.

The following problems are proposed as avenues for future research:
\begin{problem}
Given a graph $G$ and a natural number $k$, determine the value $$\gamma_k'(G)=\min \{\gamma_{uc}(G,c)\, /\,  c \mbox{ is a $k$--coloring of } G \}$$ and indentify the coloring(s) $c'$ such that $\gamma_{uc}(G,c')=\gamma'_k(G)$.

Analogously, compute the weight $$\omega'(G)=\min \{\omega_{uc}(G,c)\, /\,  c \mbox{ is an optimal coloring of } G \}$$ and find the coloring(s) $c'$ such that $\omega_{uc}(G,c')=\omega'(G)$.

\end{problem}

To the light of the well--known inequality $\gamma (G) \leq i(G) \leq \alpha (G)$, given a natural number $k$ such that $k \geq \chi(G)+1$, it is easy to check the new inequality with some of the new parameters $\gamma(G) \leq \gamma_k'(G) \leq i(G) \leq \chi(\overline{G})$.

\begin{problem}
Investigate the relationship between the previously defined parameters and $\gamma_{uc}(G,c)$.
\end{problem}

\begin{problem}
Develop algorithms to compute $\gamma_{uc}(G)$ and $\chi_{uc}(G)$ for specific graph families and analyze the computational complexity of these problems for general graphs.
\end{problem}

\begin{problem}
Actually, in the proof of Theorem~\ref{th:Omegachigamma} we never use more than $2\chi(G)-1$ colors. Is it true that there exists always an $\Omega_{uc}$--domination coloring $c$ of $G$ ($\omega_{uc}(G,c)=\Omega_{uc}(G)$) using fewer than $2\chi(G)$ colors?
\end{problem}

\begin{problem}
Find better bounds for the number of colors used by an  $\Omega_{uc}$--domination coloring:

\begin{itemize}
    \item General graphs.
    \item Bipartite graphs.
    \item Trees.
\end{itemize}
\end{problem}

\begin{problem}
Is it true for any tree $T$ that $\Omega_{uc}(T)\leq \gamma_r(T)$?
\end{problem}

Additionally, identify pairs of integers  $a, b \geq 2$  such that there exists a graph $G$  with $\Omega_{uc}(G)=a$ y $\gamma_r(G)=b$.

\begin{problem}
Is determining $\Omega_{uc}(G)$ NP--complete for the following classes of graphs?
\begin{itemize}
    \item Trees.
    \item Bipartite graphs.
    \item Interval graphs.
\end{itemize}
\end{problem}

These problems highlight key challenges in the study of these parameters and their practical computation in graph theory, offering a roadmap for further research in this area.

\bibliographystyle{plain}

\bibliography{refs.bib}

\begin{thebibliography}{10}

\bibitem{2020-alikhani-total}
Saeid Alikhani and Nima Ghanbari.
\newblock Total dominator chromatic number of graphs with specific
  construction.
\newblock {\em Open Journal of Discrete Applied Mathematics}, 3(2):1--7, 2020.

\bibitem{2012-abc-odcig}
S~Arumugam, Jay Bagga, and K~Raja Chandrasekar.
\newblock On dominator colorings in graphs.
\newblock {\em Proceedings-Mathematical Sciences}, 122(4):561--571, 2012.

\bibitem{COCKAYNE200411}
Ernie~J Cockayne, Paul~A Dreyer, Sandra~M Hedetniemi, and Stephen~T Hedetniemi.
\newblock Roman domination in graphs.
\newblock {\em Discrete Mathematics}, 278(1):11--22, 2004.

\bibitem{Garey1976}
M.~R. Garey, D.~S. Johnson, and L.~Stockmeyer.
\newblock Some simplified np-complete graph problems.
\newblock {\em Theoretical Computer Science}, 1:237--267, 1976.

\bibitem{GERNERT1989151}
Dieter Gernert.
\newblock Inequalities between the domination number and the chromatic number
  of a graph.
\newblock {\em Discrete Mathematics}, 76(2):151--153, 1989.

\bibitem{2013-gh-idgsrs}
Wayne Goddard and Michael~A. Henning.
\newblock Independent domination in graphs: A survey and recent results.
\newblock {\em Discrete Mathematics}, 313(7):839--854, 2013.

\bibitem{1969-h-gt}
F~Harary.
\newblock {\em Graph Theory}.
\newblock Addison-Wesley, 1969.

\bibitem{2023-haynes-domination}
Teresa~W. Haynes, Stephen~T. Hedetniemi, and Michael~A. Henning.
\newblock {\em Domination in Graphs: Core Concepts}.
\newblock Springer Monographs in Mathematics. Springer, Cham, 2023.

\bibitem{2021-henning-dtdcig}
Michael~A. Henning.
\newblock Dominator and total dominator colorings in graphs.
\newblock In Teresa~W. Haynes, Stephen~T. Hedetniemi, and Michael~A. Henning,
  editors, {\em Structures of Domination in Graphs}, volume~66 of {\em
  Developments in Mathematics}, pages 123--150.

\bibitem{2023-hkpp-ctdcg}
Michael~A Henning, Kusum, Arti Pandey, and Kaustav Paul.
\newblock Complexity of total dominator coloring in graphs.
\newblock {\em Graphs and Combinatorics}, 39(6):128, 2023.

\bibitem{2019-ms-pdcng}
A~Uma Maheswari and J~Bala Samuvel.
\newblock Power dominator chromatic number for some special graphs.
\newblock {\em Int. J. Innov. Technol. Explor. Eng}, 8(12):3957--3960, 2019.

\bibitem{2018-ppw-saabm2s}
Alice Paul, Matthias Poloczek, and David~P Williamson.
\newblock Simple approximation algorithms for balanced max 2sat.
\newblock {\em Algorithmica}, 80(3):995--1012, 2018.

\bibitem{ReVelle2000}
Charles~S. ReVelle and Kenneth~E. Rosing.
\newblock Defendens imperium romanum: A classical problem in military strategy.
\newblock {\em The American Mathematical Monthly}, 107(7):585--594, 2000.

\bibitem{Stewart1999DefendTR}
I.~N. Stewart.
\newblock Defend the roman empire.
\newblock {\em Scientific American}, 281:136--138, 1999.

\end{thebibliography}

\printnomenclature

\end{document}